%% file: main.tex
\documentclass[final,3p,times]{elsarticle}

\usepackage{amssymb, amsmath, amsfonts}
\usepackage{booktabs}
\usepackage{amsthm}
\usepackage{tikz}
\usepackage{pgfplots}
\renewcommand\arraystretch{1.5}
\pgfplotsset{compat=newest}
\usetikzlibrary{shapes.geometric,decorations.pathmorphing,patterns, shapes.symbols, arrows.meta}
\usetikzlibrary{calc,angles}
\tikzset{block/.style={draw,rectangle, align=center, inner sep=5pt},
oval/.style={draw,ellipse, align = center, inner sep=5pt},
decision/.style={draw,diamond, aspect = 2, align=center,inner sep=2pt}}

\usepackage{paralist}
\usepackage{natbib}
\usepackage{hyperref}
\hypersetup{
    colorlinks=true,
    linkcolor=blue,
    filecolor=magenta,      
    urlcolor=cyan,
}
\usepackage[noabbrev]{cleveref}
\usepackage[font=small,labelfont=bf]{caption}
\usepackage{graphicx}
\graphicspath{{./figures/}}
\usepackage{comment}

\makeatletter
\def\namedlabel#1#2{\begingroup
   \def\@currentlabel{#2}%
   \label{#1}\endgroup
}
\makeatother

\journal{}

\input{lvc_style}

\input{thm_style}
\usepackage{upgreek}
\biboptions{sort&compress}
\allowdisplaybreaks
\usepackage{graphicx}
\usepackage{caption}
\usepackage{subcaption}
\usepackage{multirow}
\usepackage{siunitx}
\usepackage{adjustbox}
\usepackage{enumitem}

\crefname{assumption}{Assumption}{Assumptions}
\Crefname{assumption}{Assumption}{Assumptions}
\crefname{enumi}{Assumption}{Assumptions}
\Crefname{enumi}{Assumption}{Assumptions}

\begin{document}

\begin{frontmatter}



\title{Operator preconditioning of the Costabel FEM-BEM coupling for the time-harmonic Maxwell transmission problem}


\author[1,2]{Quang Huy Nguyen\corref{cor}}
\ead{quanghuy.nguyen@ugent.be}

\author[1]{Van Chien Le}
\ead{vanchien.le@ugent.be}

\author[1]{Kristof Cools}
\ead{kristof.cools@ugent.be}

\address[1]{IDLab, Department of Information Technology, Ghent University - imec, 9000 Ghent, Belgium.} 

\address[2]{Faculty of Mathematics and Informatics, Hanoi University of Science and Technology, 11657 Hanoi, Vietnam.} 

\cortext[cor]{Corresponding author}

\begin{abstract}
    This paper derives an equivalent formulation of the Costabel FEM-BEM coupling for the time-harmonic Maxwell transmission problem with a Lipschitz interface, and constructs a block-diagonal preconditioner for the resulting discrete system. The reformulated coupling is posed on a product of trace spaces and is obtained by introducing a trace lifting operator and the Steklov--Poincar{\' e} operator. At the algebraic level, it corresponds to eliminating the interior FEM unknowns, resulting in a discrete system that involves the Schur complement of the interior FEM block. Assuming that the frequency lies outside the resonance sets, we prove that the reformulated coupling is equivalent to the Costabel coupling at both the continuous and discrete levels. Using the operator preconditioning framework, we build a preconditioner from the Galerkin matrix induced by the Maxwell single layer boundary integral operator. The spectral condition number of the preconditioned matrix is shown to be uniformly bounded. Numerical experiments illustrate the practical effectiveness of the proposed approach.
\end{abstract}



\begin{keyword}
electromagnetic scattering \sep Costabel FEM-BEM coupling \sep operator preconditioning \sep Steklov--Poincar{\' e} operator

\MSC[2010] 65F08 \sep 65N30 \sep 65N38
\end{keyword}

\end{frontmatter}


\section{Introduction}
\label{sec: introduction}

Let $\Om_{\text{s}} \subset \R^3$ be a bounded Lipschitz domain such that $\Om_0 := \R^3\setminus \overline{\Om_{\text{s}}}$ is connected, and denote $\Gm := \partial \Om_{\text{s}}$. The domain $\Om_{\text{s}}$ is occupied by an inhomogeneous isotropic medium with permeability $\mu_{\text{s}}$ and permittivity $\varepsilon_{\text{s}}$. Assume that $\mu_{\text{s}}$ and $\varepsilon_{\text{s}}$ are real-valued, piecewise Lipschitz, and uniformly positive almost everywhere \cite[Sect.~2.1 and 3.1]{HL1996}. The exterior domain $\Om_0$ is filled with a homogeneous isotropic medium with constant permeability $\mu_0>0$ and permittivity $\varepsilon_0>0$. Let $\om >0$ be a fixed angular frequency. Let $\EEs^{\text{inc}}$ be a given incident electric field in $\Om_0$ at frequency $\om$, assumed to satisfy the homogeneous Maxwell equation in an exterior neighborhood of $\Gm$. We seek an electric field $\EEs$, representing the total electric field in $\Om_{\text{s}}$ and the scattered electric field in $\Om_0$, that satisfies the following time-harmonic Maxwell transmission problem:
\begin{equation}
    \label{eq: scattering problem}
    \left\{
    \begin{array}{@{}r@{\;}c@{\;}l@{\qqq}l@{\;}l@{}l@{}}
        \curlt\, \mu_{\text{s}}^{-1}\curlt \EEs - \omega^2 \varepsilon_{\text{s}} \EEs
        & = & \bm{0}
        & \text{in} & \Om_{\text{s}}, & \\
        \curlt\, \mu_0^{-1}\curlt \EEs - \omega^2 \varepsilon_0 \EEs
        & = & \bm{0}
        & \text{in} & \Om_0, & \\
        \gm_D^{\text{s}}\EEs - \gm_D^0 \EEs & = & \gm_D^0 \EEs^{\text{inc}} & \text{on} & \Gm, & \\
        \gm_N^{\text{s}}\EEs - \gm_N^0 \EEs & = & \gm_N^0 \EEs^{\text{inc}} & \text{on} & \Gm, & \\

        \multicolumn{5}{@{}l@{}}{%
        \displaystyle
        \lim_{r\to +\infty}\int_{\partial \mathcal{B}_r}\abs{\curlt \EEs \times\, \mathbf{n}_r + i \kappa\paren{\mathbf{n}_r\times \EEs}\times \mathbf{n}_r}^2\ds = 0.} &
    \end{array}
    \right.
\end{equation}
Here, $\kappa := \om \sqrt{\mu_0\varepsilon_0}$ is the wave number in $\Om_0$. The notations $\gm_D^{\text{s}}, \gm_D^0$ and $\gm_N^{\text{s}}, \gm_N^0$ stand for the Dirichlet and Neumann trace operators mapping fields in $\Om_{\text{s}}$ and $\Om_0$, respectively, to their traces on $\Gm$. Their definitions will be introduced in \Cref{sec: preliminaries}. In the Silver-M{\" u}ller radiation condition, $\mathcal{B}_r$ is the ball of radius $r>0$ centered at the origin, $\mathbf{n}_r$ is the outward unit normal to $\partial\mathcal{B}_r$, and $i$ is the imaginary unit. Problem \eqref{eq: scattering problem} describes the scattering of a time-harmonic electromagnetic wave by an inhomogeneous scatterer. It arises in many applications, including imaging, communication technologies, and biomedical engineering.

Over the past four decades, finite element-boundary element (FEM-BEM) couplings have been studied for a broad class of transmission problems, ranging from elliptic interface problems to acoustic and electromagnetic scattering. In this setting, an inhomogeneous bounded domain is embedded in a homogeneous unbounded medium, which makes the coupling particularly attractive: FEM solves problems in inhomogeneous bounded domains efficiently, while BEM accurately solves problems in the homogeneous unbounded domain. The central idea is to couple the variational formulation for the interior unknown with boundary integral equations for the traces of the exterior unknown through the Steklov--Poincar{\' e} operator. Focusing on the time-harmonic Maxwell transmission problem, several FEM-BEM coupling formulations have been proposed and analyzed \cite{Bourdonnaye1995, ANJ1999, Hiptmair2003, HM2008, BCAG2022}. In \cite{Bourdonnaye1995}, de La Bourdonnaye reduced the coupling formulation into a single boundary equation involving the Steklov–Poincar{\' e} operator and the tangential trace of the electric field. In \cite{ANJ1999}, the authors derived a coupling formulation with a saddle-point structure, in which the unknowns are given by the components arising from the Helmholtz and Hodge decompositions. More recently, Badia et al. \cite{BCAG2022} introduced a weak coupling formulation with optimal transmission operators, which can be solved efficiently by a non-overlapping Schwarz domain decomposition method. These works, however, rely on smooth-interface assumptions. By contrast, the Costabel FEM-BEM coupling \cite{Hiptmair2003, HM2008} appears to be the first formulation for which a rigorous analysis is available with non-smooth, in particular Lipschitz, interfaces. In \cite{Hiptmair2003}, Hiptmair established the well-posedness of the Costabel coupling formulation and of its Galerkin discretization, together with a quasi-optimal a priori error estimate. A stabilized formulation was later developed by Hiptmair and Meury \cite{HM2008} to avoid spurious resonances.

FEM-BEM coupling formulations usually lead to large discrete systems with both sparse blocks stemming from FEM-like discretization in the volume and dense blocks stemming from BEM-like discretization on the boundary, making direct solvers impractical. Consequently, iterative methods appear to be the only viable option. To accelerate the convergence of these iterative methods, different preconditioning techniques have been developed \cite{DZH+2015, JJ2002, GPS2017, GYS2018}. In \cite{DZH+2015}, the Maxwell single layer boundary integral operator was invoked to regularize the Steklov--Poincar{\' e} operator. The authors then proposed and analyzed a Calder{\' o}n preconditioner, relying on the fact that, on smooth surfaces, the regularized Steklov--Poincar{\' e} operator is a Fredholm operator of the second kind. This property, unfortunately, does not extend to the setting of non-smooth surfaces. Other preconditioners have also been proposed for different FEM-BEM coupling formulations in computational electromagnetics \cite{JJ2002, GPS2017, GYS2018}. While these preconditioners have proved effective in many complex engineering applications, the corresponding theoretical results remain limited, possibly because it is challenging to establish a theoretical analysis for these coupling formulations as rigorously and comprehensively as for the Costabel coupling. In a closely related direction, several preconditioners were studied in detail for FEM-BEM coupling formulations for the Laplace and Yukawa interface problems \cite{HS1998, FS2009, FFPS2017, ST2021}. There, the positive definiteness or positive semi-definiteness of the FEM matrices plays a central role in establishing uniform bounds on the condition numbers of the preconditioned matrices. Such arguments are no longer available for the electromagnetic scattering problem considered here.

In this work, we derive an equivalent formulation of the original Costabel coupling \cite{Hiptmair2003} and then construct a preconditioner for the corresponding discrete system. The reformulation is motivated by an idea underlying the Calder{\' o}n preconditioner \cite{DZH+2015}, namely, to use the Maxwell single layer boundary integral operator to regularize the Steklov--Poincar{\' e} operator. To this end, we introduce a trace lifting operator and the Steklov--Poincar{\' e} operator, thereby deriving an equivalent variational formulation on a product of trace spaces. Reformulations using the Steklov--Poincar{\' e} operator, as well as preconditioners for the corresponding discrete systems, have been studied previously \cite{XZ1997, HSW2000, Hu2004}. However, unlike these studies, where the main ingredient is either the positive definiteness or the positive semi-definiteness of the Steklov--Poincar{\' e} operator, here we consider the Steklov--Poincar{\'e} operator for which these definiteness properties are not available. Our contribution is to show that, for time-harmonic Maxwell transmission problems with Lipschitz interfaces, the resulting formulation fits into the operator preconditioning framework \cite{Hiptmair2006}. This framework is particularly appropriate here, since it relies on uniform inf-sup conditions for relevant sesquilinear forms and the duality pairing, rather than on definiteness properties of the induced Galerkin matrices. A main technical step is therefore to identify appropriate discrete spaces and to establish the corresponding uniform inf-sup conditions. Based on these ingredients, we introduce two sesquilinear forms, one involves the Maxwell single layer boundary integral operator, while the other provides a pairing between the discrete spaces. The preconditioner is then built from the associated Galerkin matrices. Assuming that the frequency does not belong to the resonance sets, the spectral condition number of the preconditioned matrix is proved to be uniformly bounded. As the reformulated coupling is equivalent to the Costabel coupling, this yields an efficient method for the discrete system arising from the Costabel coupling formulation.

This paper is organized as follows. \Cref{sec: preliminaries} introduces the functional setting for the volume-boundary variational formulation of the time-harmonic Maxwell transmission problem. In \Cref{sec: problem reformulation and discretization}, the Costabel coupling is reformulated as a variational problem involving only boundary unknowns. The corresponding Galerkin discretization is analyzed, and a uniform discrete inf-sup condition is established. \Cref{sec: operator preconditioning} introduces a block-diagonal preconditioner. Under suitable assumptions, the spectral condition number of the preconditioned matrix is proved to be uniformly bounded. \Cref{sec: numerical results} presents numerical examples. Finally, \Cref{sec: conclusions} contains some concluding remarks.  

\section{Preliminaries}
\label{sec: preliminaries}

This section recalls the basic ingredients needed to define the Costabel coupling formulation of Problem \eqref{eq: scattering problem}, including the relevant function spaces, the  trace operators, the integration by parts formula, and the boundary integral operators. First, for $m\ge 0$ and an open set $\Om\subset \R^3$, denote by $\HHs^m\paren{\Om}$ the Sobolev space of complex vector fields on $\Om$, where $\HHs^0\paren{\Om}=\LLs^2\paren{\Om}$. Let $\LLs^2_{\text{loc}}\paren{\Om}$ denote the Fr{\' e}chet space of complex vector fields that are locally square integrable on $\Om$. For any function $\ub$, let $\overline{\ub}$ denote its complex conjugate. 

The natural spaces for analyzing Problem \eqref{eq: scattering problem} are the following:
\begin{align*}
    \HHs\paren{\curlt, \Om} &:= \left\{\ub \in \LLs^2\paren{\Om} \mid \curlt\, \ub \in \LLs^2\paren{\Om}\right\}, \\
    \HHs_{\text{loc}}\paren{\curlt, \Om} &:= \left\{\ub \in \LLs^2_{\text{loc}}\paren{\Om} \mid \curlt\, \ub \in \LLs^2_{\text{loc}}\paren{\Om}\right\}, \\
    \HHs_{\text{loc}}\paren{\curlt^2, \Om} &:= \left\{\ub \in \HHs_{\text{loc}}\paren{\curlt, \Om} \mid \curlt \, \curlt\, \ub \in \LLs^2_{\text{loc}}\paren{\Om}\right\}, 
\end{align*}
all three spaces are endowed with their natural graph topologies; see \cite[Chap.~4]{HW2021} and \cite[Sect.~2.1]{BH2003}. 

Next, we recall the trace spaces associated with $\HHs\paren{\curlt, \Om_{\text{s}}}$ and $\HHs_{\text{loc}}\paren{\curlt, \Om_0}$. First, consider the bounded, Lipschitz domain $\Om_{\text{s}}$. On the boundary $\Gm = \partial \Om_{\text{s}}$, denote by $\Hs^{-1/2}\paren{\Gm}$ the dual of the Sobolev space $\Hs^{1/2}\paren{\Gm}$. Let $\mathbf{n}\in \LLs^\infty\paren{\Gm}$ be the outward unit normal to $\Gm$, and define the tangential space $\LLs^2_{\tv}\paren{\Gm}:= \left\{\bm{\zeta}\in \LLs^2\paren{\Gm}\mid \bm{\zeta} \cdot \mathbf{n} = \mathbf{0}\right\}$. The Dirichlet trace operator is then defined as
$$
\gm_D^{\text{s}} : \HHs^1\paren{\Om_\text{s}} \to \LLs^2_{\tv}\paren{\Gm}, \qqq \gm_D^{\text{s}}\, \bm{\zeta} := \gm^{\text{s}} \bm{\zeta} \times \mathbf{n},
$$
where $\gm^{\text{s}} : \HHs^1\paren{\Om_\text{s}} \to \LLs^2\paren{\Gm}$ is the classical trace operator. We write $\HHs^{1/2}_\times\paren{\Gm}:= \gm_D^{\text{s}}\paren{\HHs^1\paren{\Om_{\text{s}}}}$ for the range of $\gm_D^{\text{s}}$ in $\LLs^2_{\tv}\paren{\Gm}$. We further define the dual space $\HHs^{-1/2}_\times\paren{\Gm}$ of $\HHs^{1/2}_\times\paren{\Gm}$ through the anti-symmetric pairing
$$
\inprod{\bm{\zeta}}{\bm{\eta}}_{\times, \Gm} := \int_\Gm \paren{\bm{\zeta}\times \mathbf{n}} \cdot \bm{\eta} \ds \qqq \forall \bm{\zeta}, \bm{\eta} \in \LLs^2_{\tv}\paren{\Gm}.
$$
We next introduce the space $\widehat{\Hs}^{3/2}\paren{\Gm}:= \gm^{\text{s}}\paren{\Hs^2\paren{\Om_{\text{s}}}}$ and its dual $\widehat{\Hs}^{-3/2}\paren{\Gm}$. For any $\eta\in \widehat{\Hs}^{3/2}\paren{\Gm}$, there exists $p\in \Hs^2\paren{\Om_{\text{s}}}$ such that $\eta = \gm^{\text{s}} p$. By adopting from the definition of the $\mathbf{grad}_\Gm$ operator \cite[Prop.~3.4]{BCS2002}:
$$
\mathbf{grad}_\Gm : \widehat{\Hs}^{3/2}\paren{\Gm}\to \HHs^{1/2}_\times\paren{\Gm}, \qqq \mathbf{grad}_{\Gm}\, \eta:= \gm_D^{\text{s}}\paren{\nabla p} \times \mathbf{n},
$$
we can define the operator $\operatorname{div}_\Gm: \HHs^{-1/2}_\times\paren{\Gm}\to \widehat{\Hs}^{-3/2}\paren{\Gm}$ as 
$$
\inprod{\operatorname{div}_\Gm \bm{\zeta}}{\eta}_{\widehat{\Hs}^{-3/2}\paren{\Gm},\, \widehat{\Hs}^{3/2}\paren{\Gm}} := -\inprod{\bm{\zeta}}{\mathbf{n}\times \mathbf{grad}_\Gm\, \eta}_{\times, \Gm}\qqq \forall \bm{\zeta} \in \HHs^{-1/2}_\times\paren{\Gm}, \eta\in \widehat{\Hs}^{3/2}\paren{\Gm}.
$$
Consider the space
$$
\HHs^{-1/2}_\times\paren{\operatorname{div}_\Gm, \Gm} := \left\{\bm{\zeta} \in \HHs^{-1/2}_\times\paren{\Gm}\mid \operatorname{div}_\Gm \bm{\zeta} \in \Hs^{-1/2}\paren{\Gm} \right\},
$$
equipped with the graph norm. We recall from \cite[Thm.~2]{BH2003} the following essential result:

\begin{theorem}
    \label{theo: self duality}
    The pairing $\inprod{\cdot}{\cdot}_{\times, \Gm}$ can be extended to a bounded bilinear form on $\HHs^{-1/2}_\times\paren{\operatorname{div}_\Gm, \Gm}$. Moreover, with respect to this pairing, the space $\HHs^{-1/2}_\times\paren{\operatorname{div}_\Gm, \Gm}$ is self-dual.
\end{theorem}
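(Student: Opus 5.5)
The plan is to build the self-duality of $\HHs^{-1/2}_\times\paren{\operatorname{div}_\Gm, \Gm}$ out of the volume Green formula for $\HHs\paren{\curlt,\Om_{\text{s}}}$. This follows the strategy of Buffa--Hiptmair and Buffa--Costabel--Sheen: identify $\HHs^{-1/2}_\times\paren{\operatorname{div}_\Gm, \Gm}$ with the range of the tangential trace on $\HHs\paren{\curlt,\Om_{\text{s}}}$, then use a Maxwell-type lifting to represent functionals.

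\textbf{Step 1 (trace space).} First I would show that the tangential trace $\gm_D^{\text{s}}$, defined initially on $\HHs^1\paren{\Om_{\text{s}}}$, extends to a continuous surjection
$$
\gm_D^{\text{s}}:\HHs\paren{\curlt,\Om_{\text{s}}}\to \HHs^{-1/2}_\times\paren{\operatorname{div}_\Gm, \Gm}.
$$
The extension and its continuity into $\HHs^{-1/2}_\times\paren{\Gm}$ come from Green's formula
$$
\int_{\Om_{\text{s}}} \paren{\curlt\,\ub\cdot \vb - \ub\cdot\curlt\,\vb}\dx = \pm\inprod{\gm_D^{\text{s}}\ub}{\gm_D^{\text{s}}\vb}_{\times,\Gm},
$$
tested with $\vb\in\HHs^1\paren{\Om_{\text{s}}}$ and using the density of smooth fields. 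The bound on $\operatorname{div}_\Gm$ follows from the identity $\operatorname{div}_\Gm(\gm_D^{\text{s}}\ub)=\pm\,\gm^{\text{s}}\paren{\mathbf{n}\cdot\curlt\,\ub}$, read weakly. Concretely, I would test against $\mathbf{grad}_\Gm\eta$ with $\eta=\gm^{\text{s}} p$ and $p\in \Hs^2$, and apply Green's formula with $\vb=\nabla p$. This shows that $\operatorname{div}_\Gm(\gm_D^{\text{s}}\ub)$ is the normal trace of the $\HHs\paren{\operatorname{div}}$ field $\curlt\,\ub$, which lies in $\Hs^{-1/2}\paren{\Gm}$. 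Surjectivity is the delicate point on a Lipschitz boundary. I would obtain it from a Hodge decomposition of a given $\bm\zeta$: split $\bm\zeta = \mathbf{grad}_\Gm$-type part $+$ $\mathbf{curl}_\Gm$-type part, lift the scalar potentials by solving Neumann/Dirichlet Laplace problems in $\Om_{\text{s}}$, and invoke the regularity $\Hs^{1+s}$ results of Jerison--Kenig type. Equivalently, one can cite the characterization of Buffa--Costabel--Sheen.

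\textbf{Step 2 (extension of the pairing).} For $\bm\zeta,\bm\eta\in \HHs^{-1/2}_\times\paren{\operatorname{div}_\Gm,\Gm}$, choose liftings $\ub,\vb\in\HHs\paren{\curlt,\Om_{\text{s}}}$ using a bounded right inverse of $\gm_D^{\text{s}}$. Such an inverse exists by the open mapping theorem applied on the quotient by the kernel. Then define $\inprod{\bm\zeta}{\bm\eta}_{\times,\Gm}$ as the volume integral above. This value is independent of the chosen liftings, because fields in $\HHs_0\paren{\curlt}$ are orthogonal in the Green formula. It agrees with the $\LLs^2_{\tv}$ pairing on smooth data, and it is bounded by Cauchy--Schwarz together with the boundedness of the lifting.

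\textbf{Step 3 (self-duality).} The pairing gives a bounded map from the space into its dual, so it remains to show this map is an isomorphism. Given $\ell$ in the dual, I consider the functional $\ub\mapsto\ell(\gm_D^{\text{s}}\ub)$ on $\HHs\paren{\curlt,\Om_{\text{s}}}$ and represent it by Riesz: there is $\wb$ with $\ell(\gm_D^{\text{s}}\ub)=(\wb,\ub)_{\HHs(\curlt)}$. Since the functional vanishes on $\HHs_0\paren{\curlt}$, the field $\wb$ satisfies $\curlt\,\curlt\,\wb+\wb=0$. Setting $\vb:=\curlt\,\wb$ gives $\curlt\,\vb=-\wb$, so $\vb\in\HHs\paren{\curlt,\Om_{\text{s}}}$. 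Green's formula then yields
$$
(\wb,\ub)_{\HHs(\curlt)}=\int_{\Om_{\text{s}}}(\curlt\,\ub\cdot\vb - \ub\cdot\curlt\,\vb)\dx=\pm\inprod{\gm_D^{\text{s}}\ub}{\gm_D^{\text{s}}\vb}_{\times,\Gm},
$$
with complex conjugates handled appropriately. Hence $\ell$ is represented by $\pm\gm_D^{\text{s}}\vb$, with norm control, which proves surjectivity. Injectivity follows symmetrically: if a trace pairs to zero against all traces, the same Riesz/Maxwell construction shows that its energy lifting vanishes.

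The main obstacle is Step 1, namely surjectivity and the closed-range characterization of the tangential trace on a Lipschitz $\Gm$, because it requires the nonsmooth Hodge decomposition and the elliptic regularity on Lipschitz domains. If that becomes too heavy, I would cite the characterization theorem rather than reprove it. Steps 2 and 3 are then a standard abstract argument.
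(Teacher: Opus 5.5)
Your proof is correct, but it takes a different route from the one behind the statement. The paper gives no proof here. It quotes \cite[Thm.~2]{BH2003}, which rests on the Buffa--Costabel--Sheen trace theory \cite{BCS2002}: Hodge-type decompositions of $\HHs^{-1/2}_\times(\operatorname{div}_\Gm,\Gm)$ into a $\mathbf{grad}_\Gm$-type and a $\mathbf{curl}_\Gm$-type component, obtained via elliptic regularity on Lipschitz domains, so that duality essentially reduces, component by component, to $\Hs^{1/2}$--$\Hs^{-1/2}$ duality. You instead put all the hard analysis into one step: surjectivity of $\gm_D^{\text{s}}$ onto the intrinsically defined space, which you may cite, as the paper does in \Cref{theo: integration by parts}. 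You then define the pairing by the volume Green form on liftings. This is well defined because $\HHs_0(\curlt,\Om_{\text{s}})$ is the closure of $\CCs_0^\infty(\Om_{\text{s}})$. Self-duality then follows from a soft Hilbert-space argument. The signs and conjugations work out. If $\ell(\gm_D^{\text{s}}\ub)=\int_{\Om_{\text{s}}}\ub\cdot\overline{\bm{w}}+\curlt\,\ub\cdot\curlt\,\overline{\bm{w}}\dx$ with $\bm{w}\perp\HHs_0(\curlt,\Om_{\text{s}})$, then $\curlt\,\curlt\,\overline{\bm{w}}=-\overline{\bm{w}}$, and \Cref{theo: integration by parts} gives $\ell=-\inprod{\cdot}{\gm_D^{\text{s}}\curlt\,\overline{\bm{w}}}_{\times,\Gm}$. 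Applying the same identity to the minimal-norm lifting of an element that pairs to zero with every trace gives injectivity. Your route is shorter and avoids decompositions at the duality step. It even shows the self-duality is isometric for the quotient norm, since $\bm{w}\mapsto\curlt\,\bm{w}$ maps $\HHs_0(\curlt,\Om_{\text{s}})^\perp$ isometrically onto itself. The decomposition route costs more, but it produces the explicit splitting that is reused elsewhere, e.g.\ in the $T$-coercivity analysis behind \Cref{prop: old formulation well-posed}. One point to state explicitly in Step 2: calling your form an \emph{extension}, and making it unique, requires that traces of smooth fields are dense in $\HHs^{-1/2}_\times(\operatorname{div}_\Gm,\Gm)$. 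This follows from density of smooth fields in $\HHs(\curlt,\Om_{\text{s}})$ together with the continuity and surjectivity of $\gm_D^{\text{s}}$.
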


The space $\HHs^{-1/2}_\times\paren{\operatorname{div}_\Gm, \Gm}$ is the natural space for the Dirichlet trace of functions in $\HHs\paren{\curlt, \Om_{\text{s}}}$. The following result can be gathered from \cite[Thm.~4.1]{BCS2002}, \cite[Thm.~5]{Buffa2003}, and \cite[Sect.~3.2]{BHVS2003}:

\begin{theorem}
    \label{theo: integration by parts}
    The Dirichlet trace operator $\gm_D^{\text{s}}$ can be extended to a bounded linear and surjective operator from $\HHs\paren{\curlt, \Om_{\text{s}}}$ into $\HHs^{-1/2}_\times\paren{\operatorname{div}_\Gm, \Gm}$. Moreover, it admits bounded linear right inverses. The following integration by parts formula holds:
    $$
    \int_{\Om_{\text{s}}} \curlt\, \ub \cdot\vb - \ub\cdot\curlt\, \vb \dx = -\inprod{\gm_D^{\text{s}}\, \ub}{\gm_D^{\text{s}}\, \vb}_{\times, \Gm}\qqq \forall \ub, \vb \in \HHs\paren{\curlt, \Om_{\text{s}}}.
    $$
\end{theorem}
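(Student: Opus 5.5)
The plan follows the standard density-and-duality route of Buffa--Ciarlet--Costabel--Sheen. I begin with smooth fields, obtain the integration by parts identity there, and then use that identity together with Theorem~\ref{theo: self duality} to extend $\gm_D^{\text{s}}$ continuously. Surjectivity and the right inverse come last, from a variational lifting.

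\textbf{Smooth fields.} For $\ub,\vb\in \HHs^1\paren{\Om_{\text{s}}}$, I would first take $\ub,\vb \in \mathbf{C}^\infty(\overline{\Om_{\text{s}}})$. The identity
$$
\operatorname{div}(\ub\times\vb)=\curlt\,\ub\cdot\vb-\ub\cdot\curlt\,\vb
$$
and the divergence theorem on the Lipschitz domain $\Om_{\text{s}}$ give
$$
\int_{\Om_{\text{s}}}\curlt\,\ub\cdot\vb-\ub\cdot\curlt\,\vb\dx=\int_\Gm (\ub\times\vb)\cdot\mathbf{n}\ds .
$$
A pointwise vector identity then rewrites the right-hand side as $-\int_\Gm (\gm_D^{\text{s}}\ub\times\mathbf{n})\cdot\gm_D^{\text{s}}\vb\ds=-\inprod{\gm_D^{\text{s}}\ub}{\gm_D^{\text{s}}\vb}_{\times,\Gm}$. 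The sign must be checked carefully, since $\gm_D^{\text{s}}$ here is $\gm^{\text{s}}\ub\times\mathbf{n}$ rather than $\mathbf{n}\times\ub$. The formula then extends to $\HHs^1\paren{\Om_{\text{s}}}$ by density of smooth fields and continuity of the classical trace.

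\textbf{Continuous extension.} Let $\bm{\xi}\in\HHs^{-1/2}_\times\paren{\operatorname{div}_\Gm,\Gm}$, and fix a bounded right inverse $R$ of $\gm_D^{\text{s}}$ from $\HHs^{-1/2}_\times\paren{\operatorname{div}_\Gm,\Gm}$ into $\HHs\paren{\curlt,\Om_{\text{s}}}$; its construction is described below. For smooth $\ub$, put $\vb=R\bm{\xi}$ in the smooth identity and use the density of $\HHs^1$-traces. This gives
$$
\abs{\inprod{\gm_D^{\text{s}}\ub}{\bm{\xi}}_{\times,\Gm}}\lesssim\norm{\ub}_{\HHs(\curlt)}\norm{\bm{\xi}}.
$$
By the self-duality in Theorem~\ref{theo: self duality}, this bounds $\norm{\gm_D^{\text{s}}\ub}_{\HHs^{-1/2}_\times(\operatorname{div}_\Gm,\Gm)}$ by $\norm{\ub}_{\HHs(\curlt,\Om_{\text{s}})}$. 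Since smooth fields are dense in $\HHs\paren{\curlt,\Om_{\text{s}}}$ (true for Lipschitz domains), $\gm_D^{\text{s}}$ extends uniquely and boundedly to all of $\HHs\paren{\curlt,\Om_{\text{s}}}$. Both sides of the integration by parts identity are continuous in the $\HHs(\curlt)$ norm, so the formula extends as well.

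\textbf{Right inverse and surjectivity.} Given $\bm{\xi}$, I would set $R\bm{\xi}:=\ub$, where $\ub\in\HHs\paren{\curlt,\Om_{\text{s}}}$ solves
$$
\curlt\,\curlt\,\ub+\ub=0 \quad\text{in } \Om_{\text{s}}
$$
with the natural boundary condition encoded weakly through the pairing. By Lax--Milgram, this is the Riesz representative of the functional $\vb\mapsto\inprod{\bm{\xi}}{\gm_D^{\text{s}}\vb}_{\times,\Gm}$. One then has to show that $\gm_D^{\text{s}}$ of this $\ub$ recovers $\bm{\xi}$, up to the rotation $\times\mathbf{n}$ hidden in the pairing. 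This requires knowing that the range of $\gm_D^{\text{s}}$ is dense and that the pairing is nondegenerate, which is again supplied by Theorem~\ref{theo: self duality}. Boundedness of $R$ follows from the Lax--Milgram estimate.

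\textbf{Main obstacle.} The hard step is the identification of the range as exactly $\HHs^{-1/2}_\times\paren{\operatorname{div}_\Gm,\Gm}$ on a merely Lipschitz boundary. This needs the Hodge-type decomposition of the trace space from \cite{BCS2002}: each $\bm{\xi}$ is split into a part in $\HHs^{1/2}_\times\paren{\Gm}$, lifted through $\HHs^1$, and a surface-gradient part, lifted as $\nabla p$ with $p$ solving a Neumann problem. I would quote this decomposition rather than reprove it.
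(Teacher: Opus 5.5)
Your smooth-field computation is correct, including the sign coming from $\gm_D^{\text{s}}\ub=\gm^{\text{s}}\ub\times\mathbf{n}$. Steps 2 and 3, however, do not form a valid argument, because each presupposes the other.

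In Step 2 you insert $\vb=R\bm{\xi}\in\HHs\paren{\curlt,\Om_{\text{s}}}$ into an identity you have proved only for $\HHs^1$ fields. That already assumes $\gm_D^{\text{s}}$ is defined on $\HHs\paren{\curlt,\Om_{\text{s}}}$ with $\gm_D^{\text{s}}R\bm{\xi}=\bm{\xi}$. In Step 3, the Lax--Milgram right-hand side $\vb\mapsto\inprod{\bm{\xi}}{\gm_D^{\text{s}}\overline{\vb}}_{\times,\Gm}$ is bounded on $\HHs\paren{\curlt,\Om_{\text{s}}}$ only if you already have the continuity claimed in Step 2.

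Continuity must therefore be proved first, without any lifting of $\HHs^{-1/2}_\times\paren{\operatorname{div}_\Gm,\Gm}$. Take $\ub$ smooth.
\begin{itemize}
\item The $\HHs^{-1/2}_\times\paren{\Gm}$-norm of $\gm_D^{\text{s}}\ub$ is bounded by testing the smooth Green formula with bounded $\HHs^1$-liftings of elements of $\HHs^{1/2}_\times\paren{\Gm}$. These liftings exist by the very definition of that space as a trace space.
\item The surface divergence is handled by taking $\vb=\nabla p$, $p\in\Hs^2\paren{\Om_{\text{s}}}$, in the Green formula. This gives $\operatorname{div}_\Gm\paren{\gm_D^{\text{s}}\ub}=\mathbf{n}\cdot\curlt\,\ub$. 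Continuity of the normal trace on $\HHs\paren{\operatorname{div},\Om_{\text{s}}}$, applied to the divergence-free field $\curlt\,\ub$, then bounds its $\Hs^{-1/2}\paren{\Gm}$-norm.
\end{itemize}
Density of smooth fields gives the extension. Theorem~\ref{theo: self duality} is needed only to pass to the limit in the integration by parts formula.

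Step 3 also lifts with the wrong field. The natural boundary condition of $\curlt\,\curlt\,\ub+\ub=\bm{0}$ prescribes the tangential trace of $\curlt\,\ub$, not of $\ub$. The map $\bm{\xi}\mapsto\gm_D^{\text{s}}\ub$ is a Neumann-to-Dirichlet operator, not a rotation, so $R\bm{\xi}:=\ub$ is not a right inverse.

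The correct choice is $R\bm{\xi}:=\curlt\,\ub$. Since $\curlt\paren{\curlt\,\ub}=-\ub\in\LLs^2\paren{\Om_{\text{s}}}$, this field lies in $\HHs\paren{\curlt,\Om_{\text{s}}}$ with $\norm{\curlt\,\ub}_{\HHs\paren{\curlt,\Om_{\text{s}}}}=\norm{\ub}_{\HHs\paren{\curlt,\Om_{\text{s}}}}\le C\norm{\bm{\xi}}_{\HHs^{-1/2}_\times\paren{\operatorname{div}_\Gm,\Gm}}$. Write your problem as $\paren{\ub,\vb}_{\HHs\paren{\curlt,\Om_{\text{s}}}}=\inprod{\bm{\xi}}{\gm_D^{\text{s}}\overline{\vb}}_{\times,\Gm}$. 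The Green formula, now justified, applied to $\curlt\,\ub$ and $\overline{\vb}$ yields $\inprod{\gm_D^{\text{s}}\paren{\curlt\,\ub}-\bm{\xi}}{\gm_D^{\text{s}}\overline{\vb}}_{\times,\Gm}=0$ for all $\vb$.

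Restricting to $\vb\in\HHs^1\paren{\Om_{\text{s}}}$ shows that $\gm_D^{\text{s}}\paren{\curlt\,\ub}-\bm{\xi}$ annihilates $\HHs^{1/2}_\times\paren{\Gm}$, hence vanishes in its dual. No density of the range of $\gm_D^{\text{s}}$ is needed, and Theorem~\ref{theo: self duality} would not supply that density anyway.

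With these two repairs, the Hodge-type decomposition in your last paragraph becomes unnecessary. Quoting a bounded decomposition that lifts every $\bm{\xi}$ into $\HHs^1\paren{\Om_{\text{s}}}+\nabla\Hs^1\paren{\Om_{\text{s}}}$ would essentially be quoting the surjectivity you set out to prove. For comparison, the paper gives no proof of this statement and simply imports it from \cite{BCS2002,Buffa2003,BHVS2003}.
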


Besides the Dirichlet trace operator $\gm_D^{\text{s}}$, the transmission conditions in \eqref{eq: scattering problem} also involve the Neumann trace operator $\gm_N^{\text{s}}$. Its natural domain is
$$
\HHs_{\mu_{\text{s}}}\paren{\curlt^2, \Om_{\text{s}}} := \left\{\ub \in \HHs\paren{\curlt, \Om_{\text{s}}} \mid \curlt \, \mu^{-1}_{\text{s}} \curlt\, \ub \in \LLs^2\paren{\Om_{\text{s}}}\right\}.
$$
In view of the integration by parts in \Cref{theo: integration by parts}, for each $\ub \in \HHs_{\mu_{\text{s}}}\paren{\curlt^2, \Om_{\text{s}}}$, we define $\gm_N^{\text{s}}\, \ub \in \HHs^{-1/2}_\times\paren{\operatorname{div}_\Gm, \Gm}$ as the unique element satisfying
\begin{equation}
    \label{eq: neumann integration by parts}
    \int_{\Om_{\text{s}}} \curlt\, \mu^{-1}_{\text{s}} \curlt\, \ub \cdot\vb - \mu^{-1}_{\text{s}} \curlt\, \ub \cdot\curlt\, \vb \dx =: -\inprod{\gm_N^{\text{s}}\, \ub}{\gm_D^{\text{s}}\, \vb}_{\times, \Gm}\qqq \forall \vb \in \HHs\paren{\curlt, \Om_{\text{s}}}.
\end{equation}
Clearly, $\gm_N^{\text{s}}\, \ub = \gm_D^{\text{s}}\paren{\mu^{-1}_{\text{s}} \curlt\, \ub}$ for all $\ub \in \HHs_{\mu_{\text{s}}}\paren{\curlt^2, \Om_{\text{s}}}$. Moreover, by adapting the proof of \cite[Lem.~3.3]{Hiptmair2002B} to the case of varying $\mu_{\text{s}}$, we obtain a bounded linear operator $\gm_N^{\text{s}}: \HHs_{\mu_{\text{s}}}\paren{\curlt^2, \Om_{\text{s}}} \to \HHs^{-1/2}_\times\paren{\operatorname{div}_\Gm, \Gm}$.

For later use, we also introduce the Dirichlet trace kernel $\HHs_0\paren{\curlt, \Om_{\text{s}}} := \left\{\ub\in \HHs\paren{\curlt, \Om_{\text{s}}}\mid \gm_D^{\text{s}}\, \ub = \bm{0}\right\}$. By \cite[Thm.~3.33]{Monk2003}, this space coincides with the closure of $\CCs_0^\infty\paren{\Om_{\text{s}}}$ in $\HHs\paren{\curlt, \Om_{\text{s}}}$, where $\CCs_0^\infty\paren{\Om_{\text{s}}}$ denotes the space of smooth vector fields compactly supported in $\Om_{\text{s}}$. Fix an arbitrary right inverse $\mathcal{R}$ of $\gm_D^{\text{s}}$. Accordingly, every $\ub \in \HHs\paren{\curlt, \Om_{\text{s}}}$ with Dirichlet trace $\bm{\zeta} = \gm_D^{\text{s}}\, \ub\in \HHs^{-1/2}_\times\paren{\operatorname{div}_\Gm, \Gm}$ admits the decomposition
$$
\ub = \mathcal{R}\, \bm{\zeta} + \paren{\ub - \mathcal{R}\, \bm{\zeta}},
$$
where $\mathcal{R}\, \bm{\zeta}\in \mathcal{R}\paren{\HHs^{-1/2}_\times\paren{\operatorname{div}_\Gm, \Gm}}$ and $\ub - \mathcal{R}\, \bm{\zeta} \in \HHs_0\paren{\curlt, \Om_{\text{s}}}$. Moreover, since $\mathcal{R}\paren{\HHs^{-1/2}_\times\paren{\operatorname{div}_\Gm, \Gm}}$ and $\HHs_0\paren{\curlt, \Om_{\text{s}}}$ intersect trivially, we deduce that
\begin{equation}
    \label{eq: direct sum}
    \HHs\paren{\curlt, \Om_{\text{s}}} = \mathcal{R}\paren{\HHs^{-1/2}_\times\paren{\operatorname{div}_\Gm, \Gm}} \oplus \HHs_0\paren{\curlt, \Om_{\text{s}}}.
\end{equation}

For the exterior domain $\Om_0$, we denote the analogous Dirichlet and Neumann trace operators by $\gm_D^0$ and $\gm_N^0$, respectively. Again, $\HHs^{-1/2}_\times\paren{\operatorname{div}_\Gm, \Gm}$ serves as the space for the Dirichlet trace of fields in $\HHs_{\text{loc}}\paren{\curlt, \Om_0}$ and the Neumann trace of fields in $\HHs_{\text{loc}}\paren{\curlt^2, \Om_0}$.

Finally, we introduce essential boundary integral operators. Let $G_\kappa\paren{\xb, \yb}:= \operatorname{exp}\paren{i\kappa \abs{\xb-\yb}}/ 4\pi\abs{\xb-\yb}$ with $\xb, \yb \in \R^3, \xb\ne \yb$ denote the fundamental solution of the Helmholtz equation. For $\xb \in \Om_{\text{s}} \cup \Om_0$, the Maxwell single and double layer potentials are defined as
\begin{align*}
    \mathbf{SL}_\kappa\paren{\bm{\zeta}}\paren{\xb} &:= \int_\Gm G_\kappa\paren{\xb, \yb} \bm{\zeta}\paren{\yb} \ds \paren{\yb} + \dfrac{1}{\kappa^2}\mathbf{grad}\int_\Gm G_\kappa\paren{\xb, \yb} \operatorname{div}_\Gm \bm{\zeta}\paren{\yb} \ds\paren{\yb}, \\
    \mathbf{DL}_{\kappa}\paren{\bm{\eta}}\paren{\xb} &:= \curlt \int_\Gm G_\kappa\paren{\xb, \yb} \bm{\eta}\paren{\yb} \ds\paren{\yb}.
\end{align*}
These potentials induce continuous linear operators from $\HHs^{-1/2}_\times\paren{\operatorname{div}_\Gm, \Gm}$ into $\HHs_{\text{loc}}\paren{\curlt^2, \Om_{\text{s}} \cup \Om_0}$ \cite[Sect.~3.3]{BHVS2003}. Applying the average trace operator $\left\{\gm_D\right\}_\Gm := \tfrac{1}{2}\paren{\gm_D^{\text{s}} + \gm_D^0}$ to these potentials, we obtain the following continuous linear operators on $\HHs^{-1/2}_\times\paren{\operatorname{div}_\Gm, \Gm}$:
$$
\operatorname{V}_\kappa := \left\{\gm_D\right\}_\Gm \circ \mathbf{SL}_\kappa,\qqq \operatorname{K}_\kappa := \left\{\gm_D\right\}_\Gm \circ \mathbf{DL}_\kappa,
$$
see \cite[Cor.~2]{BH2003}. The operators $\operatorname{V}_\kappa$ and $\operatorname{K}_\kappa$ are called the Maxwell single and double layer boundary integral operators, respectively. Let $\ub \in \HHs_{\text{loc}}\paren{\curlt^2, \Om_0}$ be the solution of the exterior homogeneous Maxwell problem
$$
\curlt\, \mu_0^{-1}\curlt\, \ub - \omega^2 \varepsilon_0\, \ub = \bm{0} \qqq \text{in } \Om_0,
$$
subject to the Silver-M{\" u}ller radiation condition. Then the exterior Calder{\' o}n identities read as follows:
\begin{equation}
\label{eq: calderon identities}
    \begin{alignedat}{2}
        \gm_D^0\, \ub &= \paren{\tfrac{1}{2}\operatorname{Id} - \operatorname{K}_\kappa} \paren{\gm_D^0\, \ub} & &- \mu_0 \operatorname{V}_\kappa \paren{\gm_N^0\, \ub}, \\
        \gm_N^0\, \ub &= -\om^2\varepsilon_0 \operatorname{V}_\kappa \paren{\gm_D^0\, \ub} & &+ \paren{\tfrac{1}{2}\operatorname{Id} - \operatorname{K}_\kappa} \paren{\gm_N^0\, \ub}.
    \end{alignedat}
\end{equation}
where $\operatorname{Id}$ denotes the identity operator on $\HHs^{-1/2}_\times\paren{\operatorname{div}_\Gm, \Gm}$; see \cite[Sect.~5]{BH2003} and \cite[Sect.~3.3]{BHVS2003}.

Throughout this paper, $C>0$ denotes a generic constant, which may vary from line to line, depending only on $\Om_{\text{s}}, \mu_{\text{s}}, \varepsilon_{\text{s}}$ and $\kappa$, but not on $\EEs^{\text{inc}}$ or the solution $\paren{\EEs, \bm{\uplambda}}$ of Problem \eqref{eq: old formulation}.

\section{Problem reformulation and discretization}
\label{sec: problem reformulation and discretization}

In this section, we begin with the Costabel coupling formulation of Problem \eqref{eq: scattering problem}, derive an equivalent variational formulation, and then introduce its discrete counterpart. 

\subsection{Problem reformulation}
\label{subsec: problem reformulation}

Assume that Problem \eqref{eq: scattering problem} admits a solution $\EEs\in \HHs_{\text{loc}}\paren{\curlt, \Om_{\text{s}} \cup \Om_0}$ such that $\EEs_{\mid\, \Om_{\text{s}}}\in \HHs_{\mu_{\text{s}}}\paren{\curlt^2, \Om_{\text{s}}}$ and $\EEs_{\mid\, \Om_0} \in \HHs_{\text{loc}}\paren{\curlt^2, \Om_0}$. For this solution, \eqref{eq: neumann integration by parts} yields
$$
\int_{\Om_{\text{s}}} \mu_{\text{s}}^{-1}\curlt \EEs \cdot\, \curlt \, \overline{\WWs} - \omega^2 \varepsilon_{\text{s}}\EEs\cdot\, \overline{\WWs} \dx = \inprod{\gm_N^{\text{s}} \EEs}{\gm_D^{\text{s}}\overline{\WWs}}_{\times, \Gm}\qqq \forall \WWs\in \HHs\paren{\curlt, \Om_{\text{s}}}.
$$
Owing to \eqref{eq: calderon identities} and the transmission conditions in \eqref{eq: scattering problem}, we have
\begin{align*}
    \gm_N^{\text{s}} \EEs = \gm_N^0 \EEs + \gm_N^0 \EEs^{\text{inc}} &= -\om^2\varepsilon_0 \operatorname{V}_\kappa \paren{\gm_D^0 \EEs} + \paren{\tfrac{1}{2}\operatorname{Id} - \operatorname{K}_\kappa} \paren{\gm_N^0 \EEs} + \gm_N^0 \EEs^{\text{inc}} \\
    & = -\om^2\varepsilon_0 \operatorname{V}_\kappa \paren{\gm_D^{\text{s}} \EEs} + \paren{\tfrac{1}{2}\operatorname{Id} - \operatorname{K}_\kappa} \paren{\gm_N^0 \EEs} + \om^2\varepsilon_0 \operatorname{V}_\kappa \paren{\gm_D^0 \EEs^{\text{inc}}} +  \gm_N^0 \EEs^{\text{inc}},
\end{align*}
and
$$
\paren{\tfrac{1}{2}\operatorname{Id} + \operatorname{K}_\kappa} \paren{\gm_D^{\text{s}} \EEs} + \mu_0 \operatorname{V}_\kappa \paren{\gm_N^0 \EEs} = \paren{\tfrac{1}{2}\operatorname{Id} + \operatorname{K}_\kappa} \paren{\gm_D^0 \EEs^{\text{inc}}}.
$$
We now combine the last three equations and replace $\gm_N^0 \EEs$ with an independent unknown $\bm{\uplambda}$. The Costabel coupling formulation \cite[Sect.~6]{Hiptmair2003}, in the present setting, reads as follows: Determine $\paren{\EEs, \bm{\uplambda}}\in \HHs\paren{\curlt, \Om_{\text{s}}}\times \HHs^{-1/2}_{\times}\paren{\operatorname{div}_{\Gm}, \Gm}$ such that
\begin{equation}
    \label{eq: old formulation}
    \left\{\begin{alignedat}{3}
         &a\paren{\EEs, \WWs} + \om^2\varepsilon_0\inprod{\operatorname{V}_{\kappa}\paren{\gm_D^{\text{s}} \EEs}}{\gm_D^{\text{s}} \overline{\WWs}}_{\times, \Gm} \ &+&\  \inprod{\paren{-\tfrac{1}{2}\operatorname{Id} + \operatorname{K}_\kappa}\bm{\uplambda}}{\gm_D^{\text{s}}\overline{\WWs}}_{\times, \Gm} \ &=&\  \ell_1\paren{\WWs}, \\
         &\inprod{\paren{\tfrac{1}{2}\operatorname{Id} + \operatorname{K}_\kappa}\paren{\gm_D^{\text{s}}\EEs}}{\overline{\bm{\varphi}}}_{\times, \Gm} \ &+&\ \mu_0\inprod{\operatorname{V}_\kappa \bm{\uplambda}}{\overline{\bm{\varphi}}}_{\times, \Gm} \ &=&\ \ell_2\paren{\bm{\vphi}},
    \end{alignedat}\right.
\end{equation}
for all $\paren{\WWs, \bm{\varphi}}\in \HHs\paren{\curlt, \Om_{\text{s}}}\times \HHs^{-1/2}_{\times}\paren{\operatorname{div}_{\Gm}, \Gm}$, where the sesquilinear form $a : \HHs\paren{\curlt, \Om_{\text{s}}}\times \HHs\paren{\curlt, \Om_{\text{s}}} \to \C$ is defined as
$$
a\paren{\ub, \vb} := \int_{\Om_{\text{s}}} \mu_{\text{s}}^{-1}\curlt\, \ub \cdot \curlt\, \overline{\vb} - \omega^2 \varepsilon_{\text{s}}\, \ub\cdot \overline{\vb} \dx \qqq \forall \ub, \vb \in \HHs\paren{\curlt, \Om_{\text{s}}},
$$
and the linear forms $\ell_1: \HHs\paren{\curlt, \Om_{\text{s}}}\to \C$ and $\ell_2: \HHs^{-1/2}_{\times}\paren{\operatorname{div}_{\Gm}, \Gm}\to \C$ are given by
\begin{align*}
    \ell_1\paren{\vb} & := \om^2\varepsilon_0\inprod{\operatorname{V}_{\kappa}\paren{\gm_D^0 \EEs^{\text{inc}}}}{\gm_D^{\text{s}}\, \overline{\vb}}_{\times, \Gm} + \inprod{\gm_N^0 \EEs^{\text{inc}}}{\gm_D^{\text{s}}\, \overline{\vb}}_{\times, \Gm} &\forall& \vb \in \HHs\paren{\curlt, \Om_{\text{s}}}, \\
    \ell_2\paren{\bm{\eta}} & := \inprod{\paren{\tfrac{1}{2}\operatorname{Id} + \operatorname{K}_\kappa}\paren{\gm_D^0 \EEs^{\text{inc}}}}{\overline{\bm{\eta}}}_{\times, \Gm} &\forall& \bm{\eta} \in \HHs^{-1/2}_{\times}\paren{\operatorname{div}_{\Gm}, \Gm}.
\end{align*}
Here, $\EEs^{\text{inc}}$ is an incident electric field in $\Om_0$ such that $\left.\EEs^{\text{inc}}\right._{\mid\, D} \in \HHs\paren{\curlt^2, D}$, where $D\subset \Om_0$ is a bounded Lipschitz domain containing an exterior neighborhood of $\Gm$. We next state two assumptions on the angular frequency $\om$. The first one is used to establish the well-posedness of Problem \eqref{eq: old formulation}, whereas the second will only be invoked later for the lifting problem.

\begin{assumption}
    \label{assum: resonant frequency}
    The angular frequency $\omega$ is assumed to lie outside the resonance sets of the source-free constant-coefficient and variable-coefficient Maxwell problems in $\Om_{\text{s}}$. More precisely, we assume that:
    \begin{enumerate}[label=(\roman*), ref=\roman*]
        \item\label{assum: resonant frequency:const}
        The problem $\curlt\, \mu_0^{-1}\curlt\, \ub - \omega^2 \varepsilon_0\, \ub = \bm{0}$ in $\Om_{\text{s}}$ subject to $\gm_D^{\text{s}} \, \ub = \bm{0}$ on $\Gm$ admits only a trivial solution.
        \item\label{assum: resonant frequency:var}
        The problem $\curlt\, \mu_{\text{s}}^{-1}\curlt\, \ub - \omega^2 \varepsilon_{\text{s}}\, \ub = \bm{0}$ in $\Om_{\text{s}}$ subject to $\gm_D^{\text{s}} \, \ub = \bm{0}$ on $\Gm$ admits only a trivial solution.
    \end{enumerate}
\end{assumption}

The left-hand side of Problem \eqref{eq: old formulation} defines a sesquilinear form that is $T$-coercive up to a compact perturbation \cite[Thm.~8.3]{Hiptmair2003}. Under \Cref{assum: resonant frequency}(\ref{assum: resonant frequency:const}), the associated operator is injective \cite[Thm.~1.1, Lem.~6.1]{Hiptmair2003}. The well-posedness of Problem \eqref{eq: old formulation} therefore follows from the Riesz--Schauder theorem \cite[Thm.~5.3.7]{HW2021}.

\begin{proposition}
    \label{prop: old formulation well-posed}
    Under \Cref{assum: resonant frequency}(\ref{assum: resonant frequency:const}), Problem \eqref{eq: old formulation} is well-posed. 
\end{proposition}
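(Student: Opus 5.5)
The argument follows the Fredholm route indicated just before the statement. Denote by $\mathcal{B}$ the sesquilinear form on the left-hand side of Problem \eqref{eq: old formulation}, posed on $X := \HHs\paren{\curlt, \Om_{\text{s}}}\times \HHs^{-1/2}_{\times}\paren{\operatorname{div}_{\Gm}, \Gm}$, and by $\mathsf{B}: X\to X'$ the associated bounded operator. Boundedness of $\mathcal{B}$ is immediate. It follows from the continuity of $\gm_D^{\text{s}}$ (\Cref{theo: integration by parts}), of $\operatorname{V}_\kappa$ and $\operatorname{K}_\kappa$ on $\HHs^{-1/2}_{\times}\paren{\operatorname{div}_{\Gm}, \Gm}$, and of the pairing $\inprod{\cdot}{\cdot}_{\times,\Gm}$ (\Cref{theo: self duality}). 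The same continuity properties, together with the assumed regularity of $\EEs^{\text{inc}}$ on $D$ (which makes $\gm_D^0\EEs^{\text{inc}}$ and $\gm_N^0\EEs^{\text{inc}}$ lie in the trace space), show that $\ell_1$ and $\ell_2$ are bounded antilinear functionals. Well-posedness therefore reduces to showing that $\mathsf{B}$ is an isomorphism.

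\textbf{Step 1: Fredholm property.} I would invoke \cite[Thm.~8.3]{Hiptmair2003}: there exist an isomorphism $\mathsf{T}$ of $X$ and a compact operator $\mathsf{C}$ such that $\abs{\mathcal{B}\paren{\ub, \mathsf{T}\ub} + \inprod{\mathsf{C}\ub}{\ub}}\ge c\norm{\ub}_X^2$. For the reader I would sketch where this comes from. One uses a stable Helmholtz-type splitting of $\HHs\paren{\curlt,\Om_{\text{s}}}$ into a gradient part and a regular part, together with the analogous splitting of the trace space. $\mathsf{T}$ then flips signs on the gradient components, and the volume form $a$ and the boundary form $\mu_0\inprod{\operatorname{V}_\kappa\cdot}{\cdot}$ each become coercive modulo compact terms. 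These compact terms come from the zeroth-order volume term and from $\operatorname{V}_\kappa-\operatorname{V}_0$ restricted to regular parts. The off-diagonal terms $\pm\tfrac12\operatorname{Id}+\operatorname{K}_\kappa$ need more care: the $\tfrac12\operatorname{Id}$ contributions cancel in the combined form by the antisymmetry of the pairing, and the $\operatorname{K}_\kappa$ contributions cancel between the two equations up to compact remainders. It follows that $\mathsf{B}$ is Fredholm of index zero.

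\textbf{Step 2: injectivity.} Let $\paren{\EEs,\bm{\uplambda}}$ solve the homogeneous problem. Testing with $\WWs\in \HHs_0\paren{\curlt,\Om_{\text{s}}}$ shows that $\curlt\,\mu_{\text{s}}^{-1}\curlt\EEs=\omega^2\varepsilon_{\text{s}}\EEs$, so the Neumann trace $\gm_N^{\text{s}}\EEs$ is well defined. Next, define the exterior field $\ub := \mathbf{DL}_\kappa$-type/$\mathbf{SL}_\kappa$-type representation built from $\paren{\gm_D^{\text{s}}\EEs,\bm{\uplambda}}$. The second equation together with the jump relations shows that this field, restricted to the interior, has vanishing Dirichlet trace. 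It therefore solves the constant-coefficient cavity problem in $\Om_{\text{s}}$, and \Cref{assum: resonant frequency}(\ref{assum: resonant frequency:const}) forces it to vanish. By the jump relations this identifies $\bm{\uplambda}$ with the exterior Neumann trace of a radiating solution whose Dirichlet trace is $\gm_D^{\text{s}}\EEs$. The first equation then gives transmission of both traces. Finally, Rellich's lemma combined with an energy identity (taking the imaginary part of $a(\EEs,\EEs)$ and using that $\mu_{\text{s}}$ and $\varepsilon_{\text{s}}$ are real) yields a vanishing far field. Unique continuation then gives $\EEs=\bm{0}$ and hence $\bm{\uplambda}=\bm{0}$. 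This is the content of \cite[Thm.~1.1, Lem.~6.1]{Hiptmair2003}, which I would cite while recording these steps.

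\textbf{Step 3: conclusion.} A Fredholm operator of index zero that is injective is bijective, with bounded inverse, by the Riesz--Schauder theorem \cite[Thm.~5.3.7]{HW2021}. This gives existence, uniqueness and stability $\norm{\paren{\EEs,\bm{\uplambda}}}_X\le C\paren{\norm{\ell_1}+\norm{\ell_2}}$. I expect the main obstacle to be the T-coercivity in Step 1 on a Lipschitz interface, in particular controlling the $\operatorname{K}_\kappa$ coupling terms. I would not reprove this but import it directly from \cite{Hiptmair2003}, checking only that the piecewise Lipschitz, real and uniformly positive coefficients $\mu_{\text{s}}$, $\varepsilon_{\text{s}}$ fit its hypotheses.
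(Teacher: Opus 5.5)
Your proposal is correct and takes the same route as the paper: $T$-coercivity up to a compact perturbation from \cite[Thm.~8.3]{Hiptmair2003}, injectivity under \Cref{assum: resonant frequency}(\ref{assum: resonant frequency:const}) from \cite[Thm.~1.1, Lem.~6.1]{Hiptmair2003}, and then the Riesz--Schauder theorem \cite[Thm.~5.3.7]{HW2021}. The paper only cites these three ingredients; your explanatory sketch in Step~1 is loose, since whether the $\tfrac12\operatorname{Id}$ cross terms drop out of the real part depends on how $\mathsf{T}$ acts on the $\bm{\uplambda}$-component and not only on gradient components, but this is harmless because you import the result rather than reprove it.
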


We now turn to the derivation of an equivalent variational problem for \eqref{eq: old formulation} on $\HHs^{-1/2}_{\times}\paren{\operatorname{div}_{\Gm}, \Gm}\times \HHs^{-1/2}_{\times}\paren{\operatorname{div}_{\Gm}, \Gm}$. Since $a\paren{\EEs, \WWs}$ is the only volumetric contribution on the left-hand side of \eqref{eq: old formulation}, this derivation hinges on expressing this term by means of the pairing $\inprod{\cdot}{\cdot}_{\times, \Gm}$. Accordingly, we introduce the trace lifting operator $\mathcal{E} : \HHs^{-1/2}_{\times}\paren{\operatorname{div}_{\Gm}, \Gm} \to \HHs\paren{\curlt, \Om_{\text{s}}}$ which assigns to each $\bm{\zeta} \in \HHs^{-1/2}_{\times}\paren{\operatorname{div}_{\Gm}, \Gm}$ the solution $\mathcal{E}\, \bm{\zeta}\in \HHs\paren{\curlt, \Om_{\text{s}}}$ of the lifting problem
\begin{equation}
    \label{eq: trace lifting operator}
    \left\{\begin{alignedat}{2}
    &a\paren{\mathcal{E}\, \bm{\zeta} , \vb}\  &=&\ \bm{0} \q \forall \vb \in \HHs_0\paren{\curlt, \Om_{\text{s}}}, \\
    &\gm_D^{\text{s}} \paren{\mathcal{E}\, \bm{\zeta}}\  &=&\ \bm{\zeta}.
    \end{alignedat}
    \right.
\end{equation}

\begin{lemma}
    \label{lem: well-defined of trace lifting operator}
    Assume that \Cref{assum: resonant frequency}(\ref{assum: resonant frequency:var}) holds. For each $\bm{\zeta} \in \HHs^{-1/2}_{\times}\paren{\operatorname{div}_{\Gm}, \Gm}$, Problem \eqref{eq: trace lifting operator} admits a unique solution. Moreover, $\mathcal{E}$ defines a bounded linear operator. 
\end{lemma}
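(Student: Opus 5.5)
We reduce the lifting problem \eqref{eq: trace lifting operator} to a homogeneous Dirichlet problem by means of the bounded right inverse $\mathcal{R}$ of $\gm_D^{\text{s}}$ from \Cref{theo: integration by parts}. Given $\bm{\zeta}\in \HHs^{-1/2}_{\times}\paren{\operatorname{div}_{\Gm}, \Gm}$, we make the ansatz $\mathcal{E}\,\bm{\zeta} = \mathcal{R}\,\bm{\zeta} + \ub_0$ with $\ub_0\in \HHs_0\paren{\curlt, \Om_{\text{s}}}$. Then \eqref{eq: trace lifting operator} is equivalent to finding $\ub_0\in\HHs_0\paren{\curlt, \Om_{\text{s}}}$ such that
$$
a\paren{\ub_0, \vb} = -a\paren{\mathcal{R}\,\bm{\zeta}, \vb}\qqq \forall \vb\in \HHs_0\paren{\curlt, \Om_{\text{s}}}.
$$
The right-hand side is a bounded antilinear functional on $\HHs_0\paren{\curlt, \Om_{\text{s}}}$, with norm at most $C\norm{\mathcal{R}\,\bm{\zeta}}_{\HHs\paren{\curlt,\Om_{\text{s}}}}\le C\norm{\bm{\zeta}}$, because $\mu_{\text{s}}^{-1}$ and $\varepsilon_{\text{s}}$ are bounded.

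The main step is the well-posedness of the restricted sesquilinear form $a$ on $\HHs_0\paren{\curlt, \Om_{\text{s}}}$. We will use the standard Fredholm argument for Maxwell cavity problems. First, take the regular decomposition / Helmholtz decomposition $\HHs_0\paren{\curlt, \Om_{\text{s}}} = \nabla \Hs^1_0\paren{\Om_{\text{s}}} \oplus \XXs_0$, where $\XXs_0$ consists of the fields that are $\varepsilon_{\text{s}}$-orthogonal to gradients. Since $\varepsilon_{\text{s}}$ is uniformly positive and piecewise Lipschitz, $\XXs_0$ embeds compactly into $\LLs^2\paren{\Om_{\text{s}}}$; this is Weber's compactness theorem in the form available for Lipschitz domains and piecewise smooth coefficients, see \cite{HL1996} and \cite{Monk2003}. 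Next, define the sign-flipping isomorphism $T(\nabla p + \wb) := -\nabla p + \wb$. One checks that $a\paren{\ub, T\ub}$ equals a coercive part $\paren{\mu_{\text{s}}^{-1}\curlt\,\wb,\curlt\,\wb} + \omega^2\paren{\varepsilon_{\text{s}}\nabla p,\nabla p} + \paren{\varepsilon_{\text{s}}\wb,\wb}$ plus a compact perturbation $-(1+\omega^2)\paren{\varepsilon_{\text{s}}\wb,\wb}$. The cross terms vanish by $\varepsilon_{\text{s}}$-orthogonality. This establishes that $a$ is $T$-coercive up to a compact perturbation, so the associated operator is Fredholm of index zero.

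Injectivity follows from \Cref{assum: resonant frequency}(\ref{assum: resonant frequency:var}). If $a\paren{\ub_0,\vb}=0$ for all $\vb\in\HHs_0\paren{\curlt, \Om_{\text{s}}}$, then testing with $\CCs_0^\infty\paren{\Om_{\text{s}}}$ fields shows that $\curlt\,\mu_{\text{s}}^{-1}\curlt\,\ub_0-\omega^2\varepsilon_{\text{s}}\ub_0=\bm 0$ distributionally, and $\gm_D^{\text{s}}\ub_0=\bm 0$. Hence $\ub_0=\bm 0$. The Riesz--Schauder theorem \cite[Thm.~5.3.7]{HW2021} then gives existence of $\ub_0$ together with the stability bound $\norm{\ub_0}\le C\norm{\bm{\zeta}}$.

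Uniqueness of $\mathcal{E}\,\bm{\zeta}$ follows from the same injectivity. The difference of two solutions lies in $\HHs_0\paren{\curlt, \Om_{\text{s}}}$ and annihilates $a$ on that space, so it is zero. In particular, the solution does not depend on the choice of $\mathcal{R}$. Linearity follows from uniqueness. Boundedness follows from $\norm{\mathcal{E}\,\bm{\zeta}}\le\norm{\mathcal{R}\,\bm{\zeta}}+\norm{\ub_0}\le C\norm{\bm{\zeta}}$.

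The expected obstacle is the compact embedding of $\XXs_0$ for merely Lipschitz $\Om_{\text{s}}$ with variable, piecewise Lipschitz $\varepsilon_{\text{s}}$. We will cite the known result rather than reprove it.
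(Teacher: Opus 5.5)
Your proposal is correct and follows the paper's proof: you make the same splitting $\mathcal{E}\,\bm{\zeta} = \mathcal{R}\,\bm{\zeta} + \UUs_{\bm{\zeta}}$ with $\UUs_{\bm{\zeta}}\in\HHs_0\paren{\curlt,\Om_{\text{s}}}$, use well-posedness of the resulting homogeneous-Dirichlet problem, and derive the bound from the continuity of $a$ and the boundedness of $\mathcal{R}$. The only difference is that the paper cites \cite[Thm.~5.2]{Hiptmair2002} for well-posedness of $a$ on $\HHs_0\paren{\curlt,\Om_{\text{s}}}$ under \Cref{assum: resonant frequency}(\ref{assum: resonant frequency:var}), whereas you reprove it via the $\varepsilon_{\text{s}}$-orthogonal Helmholtz decomposition, $T$-coercivity, Weber-type compactness and Riesz--Schauder; you also spell out uniqueness and linearity, which the paper leaves implicit.
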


\begin{proof}
    Using a fixed bounded right inverse $\mathcal{R}$ of $\gm_D^{\text{s}}$, we write $\mathcal{E}\, \bm{\zeta} = \mathcal{R}\, \bm{\zeta} + \UUs_{\bm{\zeta}}$ with $\UUs_{\bm{\zeta}} \in \HHs_0\paren{\curlt, \Om_{\text{s}}}$. Then Problem \eqref{eq: trace lifting operator} is equivalent to 
    $$
    a\paren{\UUs_{\bm{\zeta}}, \vb} = \ell_{\bm{\zeta}}\paren{\vb} \qqq \forall \vb \in \HHs_0\paren{\curlt, \Om_{\text{s}}},
    $$
    where $\ell_{\bm{\zeta}}\paren{\vb} := - a\paren{\mathcal{R}\, \bm{\zeta}, \vb}$. Under \Cref{assum: resonant frequency}(\ref{assum: resonant frequency:var}), this problem admits a unique solution $\UUs_{\bm{\zeta}} \in \HHs_0\paren{\curlt, \Om_{\text{s}}}$ that satisfies
    $$
    \norm{\UUs_{\bm{\zeta}}}_{\HHs_0\paren{\curlt, \Om_{\text{s}}}}\le C \norm{\ell_{\bm{\zeta}}}_{\HHs_0\paren{\curlt, \Om_{\text{s}}}^\prime},
    $$
    see \cite[Thm.~5.2]{Hiptmair2002}. By the continuity of $a$ and the boundedness of $\mathcal{R}$, we have 
    $$
    \norm{\ell_{\bm{\zeta}}}_{\HHs_0\paren{\curlt, \Om_{\text{s}}}^\prime}\le C \norm{\mathcal{R}\, \bm{\zeta}}_{\HHs\paren{\curlt, \Om_{\text{s}}}} \le C \norm{\bm{\zeta}}_{\HHs^{-1/2}_{\times}\paren{\operatorname{div}_{\Gm}, \Gm}},
    $$
    and hence $\norm{\mathcal{E}\, \bm{\zeta}}_{\HHs\paren{\curlt, \Om_{\text{s}}}}\le C \norm{\bm{\zeta}}_{\HHs^{-1/2}_{\times}\paren{\operatorname{div}_{\Gm}, \Gm}}$ for all $\bm{\zeta}\in \HHs^{-1/2}_{\times}\paren{\operatorname{div}_{\Gm}, \Gm}$. The proof is complete.
\end{proof}

By taking $\vb\in \CCs_0^\infty\paren{\Om_{\text{s}}}$ in \eqref{eq: trace lifting operator}, we infer that
$$
\curlt\, \mu_{\text{s}}^{-1}\curlt\, \mathcal{E}\, \bm{\zeta} = \omega^2 \varepsilon_{\text{s}}\, \mathcal{E}\, \bm{\zeta} \qqq \text{in } \CCs_0^\infty\paren{\Om_{\text{s}}}^\prime.
$$
Since $\om^2\varepsilon_{\text{s}}\, \mathcal{E}\, \bm{\zeta} \in \LLs^2\paren{\Om_{\text{s}}}$, this implies $\curlt\, \mu_{\text{s}}^{-1}\curlt\, \mathcal{E}\, \bm{\zeta} \in \LLs^2\paren{\Om_{\text{s}}}$, and hence $\mathcal{E}\, \bm{\zeta} \in \HHs_{\mu_{\text{s}}}\paren{\curlt^2, \Om_{\text{s}}}$. Owing to \eqref{eq: neumann integration by parts} and the above distributional equation, we have
\begin{align}
    a\paren{\mathcal{E}\, \bm{\zeta}, \vb}
    &= \int_{\Om_{\text{s}}} \mu_{\text{s}}^{-1}\curlt\, \mathcal{E}\, \bm{\zeta} \cdot \curlt\, \overline{\vb}
       - \omega^2 \varepsilon_{\text{s}}\, \mathcal{E}\, \bm{\zeta}\cdot \overline{\vb} \dx
       \notag\\
    &= \int_{\Om_{\text{s}}} \curlt\, \mu_{\text{s}}^{-1}\curlt\, \mathcal{E}\, \bm{\zeta} \cdot \overline{\vb}
       - \omega^2 \varepsilon_{\text{s}}\, \mathcal{E}\, \bm{\zeta}\cdot \overline{\vb} \dx
       + \inprod{\gm_N^{\text{s}} \, \mathcal{E}\, \bm{\zeta}}{\gm_D^{\text{s}}\, \overline{\vb}}_{\times, \Gm}
       \label{eq: integration by part with trace lifting}
       \displaybreak[2]\\
    &= \inprod{\gm_N^{\text{s}} \, \mathcal{E}\, \bm{\zeta}}{\gm_D^{\text{s}}\, \overline{\vb}}_{\times, \Gm},
       \notag
\end{align}
for all $\vb \in \HHs\paren{\curlt, \Om_{\text{s}}}$. Now let $\mathcal{R}$ be an arbitrary right inverse of $\gm_D^{\text{s}}$, and define
\begin{equation}
    \label{eq: Steklov--Poincare operator}
    \mathcal{S}: \HHs^{-1/2}_{\times}\paren{\operatorname{div}_{\Gm}, \Gm}\to  \HHs^{-1/2}_{\times}\paren{\operatorname{div}_{\Gm}, \Gm},\qqq \inprod{\mathcal{S}\, \bm{\zeta}}{\overline{\bm{\eta}}}_{\times, \Gm}:= a\paren{\mathcal{E}\, \bm{\zeta}, \mathcal{R}\, \bm{\eta}} \qqq \forall \bm{\eta}\in \HHs^{-1/2}_{\times}\paren{\operatorname{div}_{\Gm}, \Gm}.
\end{equation}
This definition is independent of the choice of $\mathcal{R}$. Indeed, if $\mathcal{R}_1$ and $\mathcal{R}_2$ be two different right inverses of $\gm_D^{\text{s}}$, then $\mathcal{R}_1\, \bm{\eta} - \mathcal{R}_2\, \bm{\eta} \in \HHs_0\paren{\curlt, \Om_{\text{s}}}$ for all $\bm{\eta} \in \HHs^{-1/2}_{\times}\paren{\operatorname{div}_{\Gm}, \Gm}$, and hence, by \eqref{eq: trace lifting operator}, it follows that $a\paren{\mathcal{E}\, \bm{\zeta}, \mathcal{R}_1\, \bm{\eta} -\mathcal{R}_2\, \bm{\eta}} = 0$. Therefore $a\paren{\mathcal{E}\, \bm{\zeta}, \mathcal{R}_1\, \bm{\eta}} = a\paren{\mathcal{E}\, \bm{\zeta}, \mathcal{R}_2\, \bm{\eta}}$. By choosing $\vb = \mathcal{R}\, \bm{\eta}$ in \eqref{eq: integration by part with trace lifting}, we further obtain
$$
\inprod{\mathcal{S}\, \bm{\zeta}}{\overline{\bm{\eta}}}_{\times, \Gm} = \inprod{\gm_N^{\text{s}} \, \mathcal{E}\, \bm{\zeta}}{\overline{\bm{\eta}}}_{\times, \Gm}\qqq \forall \bm{\zeta}, \bm{\eta} \in \HHs^{-1/2}_{\times}\paren{\operatorname{div}_{\Gm}, \Gm},
$$
that is, $\mathcal{S}\, \bm{\zeta}$ coincides with the Neumann trace of $\mathcal{E}\, \bm{\zeta}$. In this sense, $\mathcal{S}$ is referred to as the Steklov--Poincar{\' e} operator. Other variants of the Steklov--Poincar{\' e} operator can be found in \cite[Sect.~4]{Bourdonnaye1995} and \cite[Def.~2.1]{DZH+2015}. 

Since any right inverse may be used in \eqref{eq: Steklov--Poincare operator}, we may take $\mathcal{R}=\mathcal{E}$ to have $\inprod{\mathcal{S}\, \bm{\zeta}}{\overline{\bm{\eta}}}_{\times, \Gm}= a\paren{\mathcal{E}\, \bm{\zeta}, \mathcal{E}\, \bm{\eta}}$. From this, we see that $\mathcal{S}$ induces on $\HHs^{-1/2}_{\times}\paren{\operatorname{div}_{\Gm}, \Gm}$ a Hermitian sesquilinear form, which is in general neither positive definite nor positive semi-definite.

Owing to \eqref{eq: Steklov--Poincare operator}, the term $a\paren{\EEs, \WWs}$ on the left-hand side of \eqref{eq: old formulation} can now be rewritten in terms of the pairing $\inprod{\cdot}{\cdot}_{\times, \Gm}$, involving the operator $\mathcal{S}$ and the Dirichlet traces of $\EEs$ and $\WWs$. This leads to the following variational problem: Determine $\paren{\bm{\tau}, \bm{\uplambda}}\in \HHs^{-1/2}_{\times}\paren{\operatorname{div}_{\Gm}, \Gm}\times \HHs^{-1/2}_{\times}\paren{\operatorname{div}_{\Gm}, \Gm}$ that satisfies
\begin{equation}
    \label{eq: new formulation}
    \left\{\begin{alignedat}{3}
         &\inprod{\paren{\mathcal{S} + \om^2\varepsilon_0 \operatorname{V}_\kappa}\bm{\tau}}{\overline{\bm{\xi}}}_{\times, \Gm} \ &+&\  \inprod{\paren{-\tfrac{1}{2}\operatorname{Id} + \operatorname{K}_\kappa}\bm{\uplambda}}{\overline{\bm{\xi}}}_{\times, \Gm} \ &=&\  \ell_{1, \Gm}\paren{\bm{\xi}}, \\
         &\inprod{\paren{\tfrac{1}{2}\operatorname{Id} + \operatorname{K}_\kappa}\bm{\tau}}{\overline{\bm{\varphi}}}_{\times, \Gm} \ &+&\ \mu_0\inprod{\operatorname{V}_\kappa \bm{\uplambda}}{\overline{\bm{\varphi}}}_{\times, \Gm} \ &=&\ \ell_2\paren{\bm{\vphi}},
    \end{alignedat}\right.
\end{equation}
for all $\paren{\bm{\xi}, \bm{\varphi}}\in \HHs^{-1/2}_{\times}\paren{\operatorname{div}_{\Gm}, \Gm}\times \HHs^{-1/2}_{\times}\paren{\operatorname{div}_{\Gm}, \Gm}$. Here, the linear form $\ell_{1,\Gm}: \HHs^{-1/2}_{\times}\paren{\operatorname{div}_{\Gm}, \Gm} \to \Cs$ is given by
$$
\ell_{1,\Gm}\paren{\bm{\zeta}} := \om^2\varepsilon_0\inprod{\operatorname{V}_{\kappa}\paren{\gm_D^0 \EEs^{\text{inc}}}}{\overline{\bm{\zeta}}}_{\times, \Gm} + \inprod{\gm_N^0 \EEs^{\text{inc}}}{\overline{\bm{\zeta}}}_{\times, \Gm} \qqq \forall \bm{\zeta} \in \HHs^{-1/2}_{\times}\paren{\operatorname{div}_{\Gm}, \Gm}.
$$
Moreover, $\ell_{1,\Gm}\paren{\gm_D^{\text{s}}\, \vb} = \ell_1\paren{\vb}$ for all $\vb \in \HHs\paren{\curlt, \Om_{\text{s}}}$. We have the following result:

\begin{theorem}
    \label{theo: continuous problems equivalence}
    Under \Cref{assum: resonant frequency}, Problems \eqref{eq: old formulation} and \eqref{eq: new formulation} are equivalent. 
\end{theorem}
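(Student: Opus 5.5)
The equivalence is understood as follows: $(\EEs,\bm{\uplambda})$ solves \eqref{eq: old formulation} if and only if $\EEs = \mathcal{E}\,\bm{\tau}$, where $(\bm{\tau},\bm{\uplambda})$ solves \eqref{eq: new formulation} and $\bm{\tau} = \gm_D^{\text{s}}\EEs$. I will prove the two directions separately. The tools are the direct sum \eqref{eq: direct sum} (with $\mathcal{R}$ chosen as $\mathcal{E}$), the definition \eqref{eq: Steklov--Poincare operator} of $\mathcal{S}$, and the identity $\ell_{1,\Gm}(\gm_D^{\text{s}}\vb)=\ell_1(\vb)$. \Cref{assum: resonant frequency}(\ref{assum: resonant frequency:var}) is needed so that $\mathcal{E}$ is well defined by \Cref{lem: well-defined of trace lifting operator}, and hence so is $\mathcal{S}$.

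\emph{From \eqref{eq: old formulation} to \eqref{eq: new formulation}.} Let $(\EEs,\bm{\uplambda})$ solve \eqref{eq: old formulation}, and test the first equation with $\WWs\in\HHs_0(\curlt,\Om_{\text{s}})$. All boundary terms vanish, and $\ell_1(\WWs)=0$ because $\ell_1$ depends only on $\gm_D^{\text{s}}\WWs$. Hence $a(\EEs,\WWs)=0$ for all such $\WWs$, with $\gm_D^{\text{s}}\EEs=:\bm{\tau}$. So $\EEs$ solves the lifting problem \eqref{eq: trace lifting operator} with data $\bm{\tau}$, and uniqueness in \Cref{lem: well-defined of trace lifting operator} gives $\EEs=\mathcal{E}\,\bm{\tau}$. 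Now take any $\bm{\xi}\in\HHs^{-1/2}_{\times}(\operatorname{div}_\Gm,\Gm)$ and set $\WWs=\mathcal{E}\,\bm{\xi}$. Then $a(\EEs,\WWs)=a(\mathcal{E}\bm{\tau},\mathcal{E}\bm{\xi})=\inprod{\mathcal{S}\bm{\tau}}{\overline{\bm{\xi}}}_{\times,\Gm}$ and $\gm_D^{\text{s}}\WWs=\bm{\xi}$, so the first equation of \eqref{eq: old formulation} becomes the first equation of \eqref{eq: new formulation}. The second equations are identical once $\gm_D^{\text{s}}\EEs=\bm{\tau}$ is substituted.

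\emph{From \eqref{eq: new formulation} to \eqref{eq: old formulation}.} Let $(\bm{\tau},\bm{\uplambda})$ solve \eqref{eq: new formulation} and set $\EEs:=\mathcal{E}\,\bm{\tau}$. The second equation of \eqref{eq: old formulation} holds immediately. For the first, decompose an arbitrary $\WWs$ via \eqref{eq: direct sum} with $\mathcal{R}=\mathcal{E}$, as $\WWs=\mathcal{E}\bm{\xi}+\WWs_0$ with $\bm{\xi}=\gm_D^{\text{s}}\WWs$ and $\WWs_0\in\HHs_0(\curlt,\Om_{\text{s}})$.
\begin{itemize}
\item On $\WWs_0$, every term vanishes: $a(\mathcal{E}\bm{\tau},\WWs_0)=0$ by \eqref{eq: trace lifting operator}, and the boundary terms and $\ell_1(\WWs_0)$ vanish because $\gm_D^{\text{s}}\WWs_0=\bm{0}$.
\item On $\mathcal{E}\bm{\xi}$, the first equation of \eqref{eq: new formulation} together with $\inprod{\mathcal{S}\bm{\tau}}{\overline{\bm{\xi}}}_{\times,\Gm}=a(\mathcal{E}\bm{\tau},\mathcal{E}\bm{\xi})$ and $\ell_{1,\Gm}(\bm{\xi})=\ell_1(\mathcal{E}\bm{\xi})$ gives the required identity.
\end{itemize}
Sesquilinearity then yields the first equation for all $\WWs$.

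The main point needing care is to justify that $\mathcal{E}$ is a legitimate right inverse, so that both \eqref{eq: direct sum} and the choice $\mathcal{R}=\mathcal{E}$ in \eqref{eq: Steklov--Poincare operator} are valid. This follows because $\gm_D^{\text{s}}\circ\mathcal{E}=\operatorname{Id}$ and $\mathcal{E}$ is bounded by \Cref{lem: well-defined of trace lifting operator}. It also remains to track the complex conjugations consistently: $\mathcal{E}$ commutes with conjugation since $\mu_{\text{s}},\varepsilon_{\text{s}}$ are real, so $\overline{\mathcal{E}\bm{\xi}}=\mathcal{E}\overline{\bm{\xi}}$. Finally, combining the two directions with \Cref{prop: old formulation well-posed}, which uses \Cref{assum: resonant frequency}(\ref{assum: resonant frequency:const}), shows that the solution map $(\EEs,\bm{\uplambda})\mapsto(\gm_D^{\text{s}}\EEs,\bm{\uplambda})$ is a bijection between the solution sets. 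Consequently \eqref{eq: new formulation} is also uniquely solvable.
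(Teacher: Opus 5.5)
Your proof is correct and follows essentially the same route as the paper: you test \eqref{eq: old formulation} with $\WWs\in\HHs_0\paren{\curlt,\Om_{\text{s}}}$ to identify $\EEs=\mathcal{E}\,\bm{\tau}$, and you use the direct sum \eqref{eq: direct sum} together with the definition \eqref{eq: Steklov--Poincare operator} of $\mathcal{S}$ in both directions. The differences are only cosmetic: you fix the right inverse $\mathcal{R}=\mathcal{E}$ where the paper keeps an arbitrary one, and your remark that $\mathcal{E}$ commutes with conjugation is harmless but unnecessary, since only $\gm_D^{\text{s}}\overline{\WWs}=\overline{\gm_D^{\text{s}}\WWs}$ is used.
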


\begin{proof}
    Let $\paren{\EEs, \bm{\uplambda}}\in \HHs\paren{\curlt, \Om_{\text{s}}}\times \HHs^{-1/2}_{\times}\paren{\operatorname{div}_{\Gm}, \Gm}$ be the solution of Problem \eqref{eq: old formulation}. By testing \eqref{eq: old formulation} with $\WWs \in \HHs_0\left(\curlt, \Om_{\text{s}}\right)$, we obtain
    $$
    a\paren{\EEs, \WWs} = \bm{0} \qqq \forall \WWs \in \HHs_0\left(\curlt, \Om_{\text{s}}\right),
    $$
    which means that $\EEs = \mathcal{E}\, \bm{\tau}$, where $\bm{\tau} = \gm_D^{\text{s}} \EEs$. Now, fix $\WWs\in \HHs\paren{\curlt, \Om_{\text{s}}}$, and denote $\bm{\xi} = \gm_D^{\text{s}} \WWs$. If $\mathcal{R}$ is an arbitrary right inverse of $\gm_D^{\text{s}}$, then $\WWs-\mathcal{R}\, \bm{\xi} \in \HHs_0\paren{\curlt, \Om_{\text{s}}}$. It thus follows from \eqref{eq: trace lifting operator} and \eqref{eq: Steklov--Poincare operator} that $a\paren{\EEs, \WWs} = a\paren{\mathcal{E}\, \bm{\tau}, \mathcal{R}\, \bm{\xi}} = \inprod{\mathcal{S}\, \bm{\tau}}{\overline{\bm{\xi}}}_{\times, \Gm}$. Substituting this into \eqref{eq: old formulation}, we obtain
    $$
    \left\{\begin{alignedat}{3}
         &\inprod{\paren{\mathcal{S} + \om^2\varepsilon_0 \operatorname{V}_\kappa}\bm{\tau}}{\overline{\bm{\xi}}}_{\times, \Gm} \ &+&\  \inprod{\paren{-\tfrac{1}{2}\operatorname{Id} + \operatorname{K}_\kappa}\bm{\uplambda}}{\overline{\bm{\xi}}}_{\times, \Gm} \ &=&\  \ell_{1,\Gm}\paren{\bm{\xi}}, \\
         &\inprod{\paren{\tfrac{1}{2}\operatorname{Id} + \operatorname{K}_\kappa}\bm{\tau}}{\overline{\bm{\varphi}}}_{\times, \Gm} \ &+&\ \mu_0\inprod{\operatorname{V}_\kappa \bm{\uplambda}}{\overline{\bm{\varphi}}}_{\times, \Gm} \ &=&\ \ell_2\paren{\bm{\vphi}},
    \end{alignedat}\right.
    $$
    for all $\bm{\varphi}\in \HHs^{-1/2}_{\times}\paren{\operatorname{div}_{\Gm}, \Gm}$. By varying $\WWs$ in $\HHs\paren{\curlt, \Om_{\text{s}}}$ and invoking the surjectivity of $\gm_D^{\text{s}}$, we recover Problem \eqref{eq: new formulation}. Therefore, Problem \eqref{eq: new formulation} admits at least one solution $\paren{\bm{\tau}, \bm{\uplambda}} = \paren{\gm_D^{\text{s}} \EEs, \bm{\uplambda}}$.

    Conversely, let $\paren{\bm{\tau}, \bm{\uplambda}}\in \HHs^{-1/2}_{\times}\paren{\operatorname{div}_{\Gm}, \Gm}\times \HHs^{-1/2}_{\times}\paren{\operatorname{div}_{\Gm}, \Gm}$ be a solution of Problem \eqref{eq: new formulation}. From $\bm{\tau} \in \HHs^{-1/2}_{\times}\paren{\operatorname{div}_{\Gm}, \Gm}$, one can uniquely determine $\mathcal{E}\, \bm{\tau} \in \HHs\paren{\curlt, \Om_{\text{s}}}$ using \eqref{eq: trace lifting operator}. Consider a fixed $\bm{\xi} \in \HHs^{-1/2}_{\times}\paren{\operatorname{div}_{\Gm}, \Gm}$. By substituting \eqref{eq: Steklov--Poincare operator} into \eqref{eq: new formulation}, we deduce that
    $$
    \left\{\begin{alignedat}{3}
        &a\paren{\mathcal{E}\, \bm{\tau}, \mathcal{R}\, \bm{\xi}} + \om^2\varepsilon_0\inprod{\operatorname{V}_{\kappa}\paren{\gm_D^{\text{s}}\, \mathcal{E}\, \bm{\tau}}}{\gm_D^{\text{s}}\, \overline{\mathcal{R}\, \bm{\xi}}}_{\times, \Gm} \ &+&\  \inprod{\paren{-\tfrac{1}{2}\operatorname{Id} + \operatorname{K}_\kappa}\bm{\uplambda}}{\gm_D^{\text{s}}\, \overline{\mathcal{R}\, \bm{\xi}}}_{\times, \Gm} \ &=&\  \ell_{1,\Gm}\paren{\bm{\xi}}, \\
        &\inprod{\paren{\tfrac{1}{2}\operatorname{Id} + \operatorname{K}_\kappa}\paren{\gm_D^{\text{s}}\, \mathcal{E}\, \bm{\tau}}}{\overline{\bm{\varphi}}}_{\times, \Gm} \ &+&\ \mu_0\inprod{\operatorname{V}_\kappa \bm{\uplambda}}{\overline{\bm{\varphi}}}_{\times, \Gm} \ &=&\ \ell_2\paren{\bm{\vphi}},
    \end{alignedat}\right.
    $$
    for all $\bm{\varphi}\in \HHs^{-1/2}_{\times}\paren{\operatorname{div}_{\Gm}, \Gm}$. Letting $\bm{\xi}$ vary in $\HHs^{-1/2}_{\times}\paren{\operatorname{div}_{\Gm}, \Gm}$, we obtain a coupling formulation with the test space $\mathcal{R}\paren{\HHs^{-1/2}_\times\paren{\operatorname{div}_\Gm, \Gm}} \times \HHs^{-1/2}_{\times}\paren{\operatorname{div}_{\Gm}, \Gm}$. Combining this with \eqref{eq: trace lifting operator}, and using \eqref{eq: direct sum}, we recover Problem \eqref{eq: old formulation} with $\EEs = \mathcal{E}\, \bm{\tau}$ and $\WWs = \mathcal{R}\, \bm{\xi} + \vb$, where $\vb\in \HHs_0\paren{\curlt, \Om_{\text{s}}}$.
\end{proof}

\begin{remark}
    \label{rem: difference with Dobbelaere}
    Problem \eqref{eq: new formulation} has a form similar to that of \cite[Eq. (41)]{DZH+2015}, and both are posed on $\HHs^{-1/2}_{\times}\paren{\operatorname{div}_{\Gm}, \Gm}\times \HHs^{-1/2}_{\times}\paren{\operatorname{div}_{\Gm}, \Gm}$. However, the two formulations are conceptually different, as the corresponding unknowns have different meanings. Indeed, by \Cref{theo: continuous problems equivalence} and \cite[Lem.~6.1]{Hiptmair2003}, Problem \eqref{eq: new formulation} may be formally viewed as determining $\paren{\gm_D^{\text{s}} \EEs, \gm_N^0 \EEs}$. In contrast, \cite[Eq. (41)]{DZH+2015} finds $\paren{-\gm_D^{\text{s}} \EEs, \mathbf{m}_0}$, where $\mathbf{m}_0$ is not the Neumann trace itself, but an auxiliary variable entering the integral representation of $\EEs_{\mid \, \Om_0}$.
\end{remark}

\subsection{Finite element discretization}
\label{subsec: finite element discretization}

Assume that $\Om_{\text{s}}$ is a polyhedron so that $\Gm$ is a polyhedral surface. Consider a family of tetrahedral, quasi-uniform meshes $\left\{\Om_h\right\}_{h\in \paren{0, h_{\#}}}$ of $\Om_{\text{s}}$, where the mesh size $h\in \paren{0, h_{\#}}$ for a given constant $h_{\#}>0$. Restricting $\left\{\Om_h\right\}_{h\in \paren{0, h_{\#}}}$ to $\Gm$ yields a family of surface meshes $\left\{\Gm_h\right\}_{h\in \paren{0, h_{\#}}}$, whose shape-regularity is inherited from $\left\{\Om_h\right\}_{h\in \paren{0, h_{\#}}}$. Denote by $\mathcal{K}_h$ the set of tetrahedra in $\Om_h$, and by $\mathcal{F}_h$ the set of all triangles in $\Gm_h$. In addition to the dependencies specified in \Cref{sec: preliminaries}, the generic constant $C>0$ may depend on the shape-regularity of $\left\{\Om_h\right\}_{h\in \paren{0, h_{\#}}}$, but not on $h$. 

The finite element spaces are defined as
$$
\begin{aligned}
    \XXs_h :=& \left\{\ub_h\in \HHs\paren{\curlt, \Om_{\text{s}}}\mid \ub_{h\, \mid\, K} = \bm{a} + \bm{b}\times \xb \text{ with } \bm{a}, \bm{b}\in \C^3, \xb \in \R^3 \text{ for all }K\in \mathcal{K}_h\right\},\\
    \YYs_h := \gm_D^{\text{s}} \XXs_h =& \left\{\bm{\zeta}_h\in \HHs^{-1/2}_{\times}\paren{\operatorname{div}_{\Gm}, \Gm} \mid \bm{\zeta}_{h\, \mid\, F} = \bm{c} + d\hat{\xb} \text{ with } \bm{c}\in \C^2, d\in \C, \hat{\xb} \in \R^2 \text{ for all } F\in \mathcal{F}_h\right\}.
\end{aligned}
$$
Here, $\XXs_h$ denotes the space of lowest-order N{\' e}d{\' e}lec elements in $\Om_h$, whereas $\YYs_h$ is the associated space of lowest-order Raviart--Thomas elements on $\Gm_h$ \cite[Sect.~5.4 and 5.5]{Monk2003}. Following \cite[Sect.~9]{Hiptmair2003}, the discrete counterpart of Problem \eqref{eq: old formulation} is given by: Determine $\paren{\EEs_h, \bm{\uplambda}_h}\in \XXs_h\times \YYs_h$ such that
\begin{equation}
    \label{eq: discrete old formulation}
    \left\{\begin{alignedat}{3}
         &a\paren{\EEs_h, \WWs_h} + \om^2\varepsilon_0\inprod{\operatorname{V}_{\kappa}\paren{\gm_D^{\text{s}} \EEs_h}}{\gm_D^{\text{s}} \overline{\WWs_h}}_{\times, \Gm} \ &+&\  \inprod{\paren{-\tfrac{1}{2}\operatorname{Id} + \operatorname{K}_\kappa}\bm{\uplambda}_h}{\gm_D^{\text{s}}\overline{\WWs_h}}_{\times, \Gm} \ &=&\  \ell_1\paren{\WWs_h}, \\
         &\inprod{\paren{\tfrac{1}{2}\operatorname{Id} + \operatorname{K}_\kappa}\paren{\gm_D^{\text{s}}\EEs_h}}{\overline{\bm{\varphi}_h}}_{\times, \Gm} \ &+&\ \mu_0\inprod{\operatorname{V}_\kappa \bm{\uplambda}_h}{\overline{\bm{\varphi}_h}}_{\times, \Gm} \ &=&\ \ell_2\paren{\bm{\vphi}_h},
    \end{alignedat}\right.
\end{equation}
for all $\paren{\WWs_h, \bm{\varphi}_h}\in \XXs_h\times \YYs_h$. Let $\bm{\mathcal{B}}_{\text{C}} : \paren{\XXs_h\times \YYs_h}\times \paren{\XXs_h\times \YYs_h}\to \C$ denote the sesquilinear form induced by the left-hand side of \eqref{eq: discrete old formulation}, and endow $\HHs\paren{\curlt, \Om_{\text{s}}}\times \HHs^{-1/2}_{\times}\paren{\operatorname{div}_{\Gm}, \Gm}$ with the canonical product norm
$$
\norm{\paren{\ub, \bm{\zeta}}}_{\Om_{\text{s}}, \Gm}^2 := \norm{\ub}^2_{\HHs\paren{\curlt, \Om_{\text{s}}}} + \norm{\bm{\zeta}}^2_{\HHs^{-1/2}_{\times}\paren{\operatorname{div}_{\Gm}, \Gm}} \qqq \forall \paren{\ub, \bm{\zeta}}\in \HHs\paren{\curlt, \Om_{\text{s}}}\times \HHs^{-1/2}_{\times}\paren{\operatorname{div}_{\Gm}, \Gm}.
$$
Under \Cref{assum: resonant frequency}(\ref{assum: resonant frequency:const}), \cite[Thm.~11.1]{Hiptmair2003} ensures that there exists $h_{\#}>0$ such that for all $h\in \paren{0, h_{\#}}$, we have
\begin{equation}
    \label{eq: discrete old inf-sup}
    \inf_{\paren{\EEs_h, \bm{\uplambda}_h} \in \XXs_h\times \YYs_h \setminus \left\{\bm{0}, \bm{0}\right\}} \sup_{\paren{\WWs_h, \bm{\varphi}_h} \in \XXs_h\times \YYs_h \setminus \left\{\bm{0}, \bm{0}\right\}} \dfrac{\abs{\bm{\mathcal{B}}_{\mathrm{C}}\paren{\paren{\EEs_h, \bm{\uplambda}_h}, \paren{\WWs_h, \bm{\varphi}_h}}}}{\norm{\paren{\EEs_h, \bm{\uplambda}_h}}_{\Om_{\text{s}}, \Gm} \norm{\paren{\WWs_h, \bm{\varphi}_h}}_{\Om_{\text{s}}, \Gm}} \ge C.
\end{equation}
In particular, Problem \eqref{eq: discrete old formulation} is uniquely solvable for all $h\in \paren{0, h_{\#}}$. 

To define the Galerkin approximation of Problem \eqref{eq: new formulation}, we next introduce the discrete counterparts of the operators used in \Cref{subsec: problem reformulation}. For each $\bm{\zeta}_h\in \YYs_h$, we define $\mathcal{E}_h\, \bm{\zeta}_h\in \XXs_h$ by
\begin{equation}
    \label{eq: discrete trace lifting operator}
    \left\{\begin{alignedat}{2}
    &a\paren{\mathcal{E}_h\, \bm{\zeta}_h, \vb_h}\  &=&\ \bm{0} \q \forall \vb_h \in \XXs_{h,0} := \XXs_h \cap \HHs_0\paren{\curlt, \Om_{\text{s}}}, \, \\
    &\gm_D^{\text{s}} \paren{\mathcal{E}_h\, \bm{\zeta}_h}\  &=&\ \bm{\zeta}_h.
    \end{alignedat}
    \right.
\end{equation}
Let $\mathcal{R}_h: \YYs_h \to \XXs_h$ be a discrete right inverse of $\gm_D^{\text{s}}$. For each $h\in \paren{0, h_{\#}}$, the operator $\mathcal{R}_h$ is bounded, but not necessarily uniformly. In what follows, no uniform boundedness is assumed unless explicitly stated otherwise. For the existence and construction of uniformly bounded discrete right inverses of $\gm_D^{\text{s}}$, we refer to \cite[Thm.~2.1]{DVS2003}, \cite[Thm.~5.6]{HM2012} or \cite[Thm.~2.3]{AGS2016}. We then define the discrete Steklov--Poincar{\' e} operator $\mathcal{S}_h: \YYs_h \to \YYs_h^\prime$ by
\begin{equation}
    \label{eq: discrete Steklov--Poincare operator}
    \inprod{\mathcal{S}_h\, \bm{\zeta}_h}{\overline{\bm{\eta}_h}}_{\YYs_h^\prime,\, \YYs_h} := a\paren{\mathcal{E}_h\, \bm{\zeta}_h, \mathcal{R}_h\, \bm{\eta}_h} \qqq \forall \bm{\eta}_h \in \YYs_h.
\end{equation}

\begin{lemma}
    \label{lem: properties of E_h and S_h}
    Assume that \Cref{assum: resonant frequency}(\ref{assum: resonant frequency:var}) is fulfilled. Then there exists $h_{\#}>0$ such that for all $h\in \paren{0, h_{\#}}$, the following results hold:
    \begin{enumerate}[label=(\roman*), ref=\roman*]
        \item Problem \eqref{eq: discrete trace lifting operator} is well-posed. Moreover, the operator $\mathcal{E}_h$ is uniformly bounded, i.e.,
        $$
        \norm{\mathcal{E}_h\, \bm{\zeta}_h}_{\HHs\paren{\curlt, \Om_{\text{s}}}}\le C \norm{\bm{\zeta}_h}_{\HHs^{-1/2}_{\times}\paren{\operatorname{div}_{\Gm}, \Gm}} \qqq \forall \bm{\zeta}_h\in \YYs_h.
        $$
        \item The definition \eqref{eq: discrete Steklov--Poincare operator} is independent of the choice of $\mathcal{R}_h$.
    \end{enumerate}
\end{lemma}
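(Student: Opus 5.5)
For part (i), I will mirror the proof of \Cref{lem: well-defined of trace lifting operator}, except that the lifting is not built with $\mathcal{R}_h$: since $\mathcal{R}_h$ need not be uniformly bounded, it would spoil the uniform estimate. Instead I will use a uniformly bounded discrete right inverse $\widetilde{\mathcal{R}}_h:\YYs_h\to\XXs_h$ of $\gm_D^{\text{s}}$, whose existence is guaranteed by \cite[Thm.~2.1]{DVS2003} or \cite[Thm.~5.6]{HM2012}. This means
$$
\norm{\widetilde{\mathcal{R}}_h\, \bm{\zeta}_h}_{\HHs\paren{\curlt, \Om_{\text{s}}}}\le C\norm{\bm{\zeta}_h}_{\HHs^{-1/2}_{\times}\paren{\operatorname{div}_{\Gm}, \Gm}}
$$
with $C$ independent of $h$. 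I then write $\mathcal{E}_h\bm{\zeta}_h=\widetilde{\mathcal{R}}_h\bm{\zeta}_h+\UUs_h$ with $\UUs_h\in\XXs_{h,0}$. Problem \eqref{eq: discrete trace lifting operator} becomes the discrete Dirichlet problem
$$
a\paren{\UUs_h,\vb_h}=-a\paren{\widetilde{\mathcal{R}}_h\bm{\zeta}_h,\vb_h}\qquad\forall\vb_h\in\XXs_{h,0}.
$$

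The key step, and the main obstacle, is a uniform discrete inf-sup condition for $a$ on $\XXs_{h,0}\times\XXs_{h,0}$ for $h<h_{\#}$. I will not re-derive it; I will invoke the standard theory for N\'ed\'elec discretizations of the variable-coefficient Maxwell cavity problem, namely \cite[Thm.~5.2]{Hiptmair2002} or its discrete analogue in \cite{Hiptmair2002}. That theory needs \Cref{assum: resonant frequency}(\ref{assum: resonant frequency:var}). Its argument runs as follows:
\begin{itemize}
\item take the discrete Helmholtz decomposition of $\XXs_{h,0}$ into discrete gradients and the discrete divergence-free part (discrete $\varepsilon_{\text{s}}$-orthogonal to gradients);
\item note that $a$ is $T$-coercive with respect to the sign flip on the gradient component;
\item use the discrete compactness property of edge elements, which lets the compact perturbation be handled and, together with injectivity from the assumption, gives the discrete inf-sup constant $C>0$ uniformly for small $h$.
\end{itemize}
This requires $\Om_{\text{s}}$ polyhedral, quasi-uniform meshes, and piecewise Lipschitz coefficients. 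The latter hypothesis is the delicate point, and I would cite \cite{Hiptmair2002} for it.

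With the uniform inf-sup constant in hand, $\UUs_h$ exists uniquely, since the system is square and finite-dimensional. It also satisfies $\norm{\UUs_h}\le C\norm{a(\widetilde{\mathcal{R}}_h\bm{\zeta}_h,\cdot)}\le C\norm{\bm{\zeta}_h}$, using the continuity of $a$ and the uniform bound on $\widetilde{\mathcal{R}}_h$. The triangle inequality then gives the uniform bound on $\mathcal{E}_h$. Uniqueness of $\mathcal{E}_h\bm{\zeta}_h$ holds regardless of the particular lift: the difference of two solutions lies in $\XXs_{h,0}$ and solves the homogeneous discrete problem, so it vanishes.

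For part (ii), the argument repeats the continuous case. Let $\mathcal{R}_{h,1}$ and $\mathcal{R}_{h,2}$ be two discrete right inverses. Then $\mathcal{R}_{h,1}\bm{\eta}_h-\mathcal{R}_{h,2}\bm{\eta}_h\in\XXs_h$ has zero Dirichlet trace, so it belongs to $\XXs_{h,0}$. The first equation of \eqref{eq: discrete trace lifting operator} then gives $a\paren{\mathcal{E}_h\bm{\zeta}_h,\mathcal{R}_{h,1}\bm{\eta}_h-\mathcal{R}_{h,2}\bm{\eta}_h}=0$. Hence \eqref{eq: discrete Steklov--Poincare operator} does not depend on the choice of $\mathcal{R}_h$.
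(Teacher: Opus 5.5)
Your proposal is correct and follows the paper's proof essentially step by step. For (i), both lift with a uniformly bounded discrete right inverse, reduce to the discrete cavity problem on $\XXs_{h,0}$, invoke its uniform stability under \Cref{assum: resonant frequency}(\ref{assum: resonant frequency:var}) from \cite{Hiptmair2002}, and conclude using the continuity of $a$. The paper cites the discrete result, Thm.~5.7, rather than the continuous Thm.~5.2, and your separate $\widetilde{\mathcal{R}}_h$ just makes explicit a distinction the paper leaves implicit; for (ii), both use that the difference of two discrete right inverses lies in $\XXs_{h,0}$.
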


\begin{proof}
    Take a uniformly bounded right inverse $\mathcal{R}_h$ of $\gm_D^{\text{s}}$. By writing $\mathcal{E}_h\, \bm{\zeta}_h = \mathcal{R}_h\, \bm{\zeta}_h + \UUs_{\bm{\zeta}, h}$ with $\UUs_{\bm{\zeta}, h}\in \XXs_{h,0}$, Problem \eqref{eq: discrete trace lifting operator} is equivalent to 
    $$
    a\paren{\UUs_{\bm{\zeta}, h}, \vb_h} = \ell_{\bm{\zeta},h}\paren{\vb_h} \qqq \forall \vb_h \in \XXs_{h,0},
    $$
    where $\ell_{\bm{\zeta},h}\paren{\vb_h} = -a\paren{\mathcal{R}_h\, \bm{\zeta}_h, \vb_h}$. Under \Cref{assum: resonant frequency}(\ref{assum: resonant frequency:var}), this problem is well-posed \cite[Thm.~5.7]{Hiptmair2002}. This, together with the continuity of $a$ and the uniform boundedness of $\mathcal{R}_h$, implies that
    $$
    \norm{\UUs_{\bm{\zeta}, h}}_{\HHs\paren{\curlt, \Om_{\text{s}}}}\le C\norm{\ell_{\bm{\zeta},h}}_{\XXs_{h,0}^\prime} \le C \norm{\mathcal{R}_h\, \bm{\zeta}_h}_{\HHs\paren{\curlt, \Om_{\text{s}}}} \le C \norm{\bm{\zeta}_h}_{\HHs^{-1/2}_{\times}\paren{\operatorname{div}_{\Gm}, \Gm}},
    $$
    and consequently, $\mathcal{E}_h$ is uniformly bounded.

    The second part does not require $\mathcal{R}_h$ to be uniformly bounded. Let $\mathcal{R}_{1,h}$ and $\mathcal{R}_{2,h}$ denote two different discrete right inverses of $\gm_D^{\text{s}}$. Since $\mathcal{R}_{1,h}\, \bm{\eta}_h - \mathcal{R}_{2,h}\, \bm{\eta}_h \in \XXs_{h,0}$ for all $\bm{\eta}_h\in \YYs_h$, \eqref{eq: discrete trace lifting operator} gives us $a\paren{\mathcal{E}_h\, \bm{\zeta}_h, \mathcal{R}_{1,h}\, \bm{\eta}_h - \mathcal{R}_{2,h}\, \bm{\eta}_h} = 0$. Therefore, $a\paren{\mathcal{E}_h\, \bm{\zeta}_h, \mathcal{R}_{1,h}\, \bm{\eta}_h} = a\paren{\mathcal{E}_h\, \bm{\zeta}_h, \mathcal{R}_{2,h}\, \bm{\eta}_h}$, meaning that the definition of $\mathcal{S}_h$ does not depend on the choice of $\mathcal{R}_h$.
\end{proof}

With these discrete operators, the Galerkin approximation of Problem \eqref{eq: new formulation} reads as: Find $\paren{\bm{\tau}_h, \bm{\uplambda}_h}\in \YYs_h\times \YYs_h$ that satisfies
\begin{equation}
    \label{eq: discrete new formulation}
    \left\{\begin{alignedat}{3}
         &\inprod{\mathcal{S}_h\, \bm{\tau}_h}{\overline{\bm{\xi}_h}}_{\YYs_h^\prime,\, \YYs_h} + \om^2\varepsilon_0\inprod{\operatorname{V}_\kappa\bm{\tau}_h}{\overline{\bm{\xi}_h}}_{\times, \Gm} \ &+&\  \inprod{\paren{-\tfrac{1}{2}\operatorname{Id} + \operatorname{K}_\kappa}\bm{\uplambda}_h}{\overline{\bm{\xi}_h}}_{\times, \Gm} \ &=&\  \ell_{1,\Gm}\paren{\bm{\xi}_h}, \\
         &\inprod{\paren{\tfrac{1}{2}\operatorname{Id} + \operatorname{K}_\kappa}\bm{\tau}_h}{\overline{\bm{\varphi}_h}}_{\times, \Gm} \ &+&\ \mu_0\inprod{\operatorname{V}_\kappa \bm{\uplambda}_h}{\overline{\bm{\varphi}_h}}_{\times, \Gm} \ &=&\ \ell_2\paren{\bm{\vphi}_h},
    \end{alignedat}\right.
\end{equation}
for all $\paren{\bm{\xi}_h, \bm{\varphi_h}}\in \YYs_h\times \YYs_h$. We also need the space
$$
\XXs_h^{\mathcal{E}} := \left\{\ub_h \in \XXs_h \mid a\paren{\ub_h, \vb_h} = \bm{0} \text{ for all } \vb_h \in \XXs_{h,0}\right\}.
$$
The following result is analogous to \Cref{theo: continuous problems equivalence}:

\begin{theorem}
    \label{theo: discrete problems equivalence}
    Assume that \Cref{assum: resonant frequency} holds. Then there exists $h_{\#}>0$ such that for all $h\in \paren{0, h_{\#}}$, Problems \eqref{eq: discrete old formulation} and \eqref{eq: discrete new formulation} are equivalent.
\end{theorem}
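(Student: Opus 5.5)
We mirror the proof of \Cref{theo: continuous problems equivalence} at the discrete level, replacing $\HHs\paren{\curlt, \Om_{\text{s}}}$, $\HHs_0\paren{\curlt, \Om_{\text{s}}}$, $\mathcal{E}$, $\mathcal{S}$ by $\XXs_h$, $\XXs_{h,0}$, $\mathcal{E}_h$, $\mathcal{S}_h$. Choose $h_{\#}$ as the minimum of the thresholds from \eqref{eq: discrete old inf-sup} and \Cref{lem: properties of E_h and S_h}. Then \eqref{eq: discrete old formulation} is uniquely solvable, and \eqref{eq: discrete trace lifting operator} is well-posed for all $h\in\paren{0,h_{\#}}$.

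The first step is a discrete direct sum. For any discrete right inverse $\mathcal{R}_h$, every $\WWs_h\in\XXs_h$ splits as $\WWs_h=\mathcal{R}_h\bm{\xi}_h+\paren{\WWs_h-\mathcal{R}_h\bm{\xi}_h}$ with $\bm{\xi}_h=\gm_D^{\text{s}}\WWs_h\in\YYs_h$. The second summand lies in $\XXs_{h,0}$, so $\XXs_h=\mathcal{R}_h\paren{\YYs_h}\oplus\XXs_{h,0}$. Since $\mathcal{E}_h$ is itself a discrete right inverse with range in $\XXs_h^{\mathcal{E}}$, well-posedness of \eqref{eq: discrete trace lifting operator} gives $\XXs_h^{\mathcal{E}}=\mathcal{E}_h\paren{\YYs_h}$. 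Indeed, if $\ub_h\in\XXs_h^{\mathcal{E}}$, then $\ub_h$ and $\mathcal{E}_h\gm_D^{\text{s}}\ub_h$ solve the same lifting problem, so they coincide by uniqueness.

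\emph{Old $\Rightarrow$ new.} Let $\paren{\EEs_h,\bm{\uplambda}_h}$ solve \eqref{eq: discrete old formulation}. Testing with $\WWs_h\in\XXs_{h,0}$ kills all boundary terms, since $\gm_D^{\text{s}}\WWs_h=\bm{0}$ and $\ell_1\paren{\WWs_h}=\ell_{1,\Gm}\paren{\bm{0}}=0$. Hence $\EEs_h\in\XXs_h^{\mathcal{E}}$, so $\EEs_h=\mathcal{E}_h\bm{\tau}_h$ with $\bm{\tau}_h:=\gm_D^{\text{s}}\EEs_h$. For any $\WWs_h\in\XXs_h$ with trace $\bm{\xi}_h$, we have $\WWs_h-\mathcal{R}_h\bm{\xi}_h\in\XXs_{h,0}$. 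It follows that $a\paren{\EEs_h,\WWs_h}=a\paren{\mathcal{E}_h\bm{\tau}_h,\mathcal{R}_h\bm{\xi}_h}=\inprod{\mathcal{S}_h\bm{\tau}_h}{\overline{\bm{\xi}_h}}_{\YYs_h^\prime,\YYs_h}$. Substituting this and $\ell_1\paren{\WWs_h}=\ell_{1,\Gm}\paren{\bm{\xi}_h}$ into \eqref{eq: discrete old formulation}, and using $\YYs_h=\gm_D^{\text{s}}\XXs_h$ so that $\bm{\xi}_h$ ranges over all of $\YYs_h$, shows that $\paren{\bm{\tau}_h,\bm{\uplambda}_h}$ solves \eqref{eq: discrete new formulation}.

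\emph{New $\Rightarrow$ old.} Given a solution $\paren{\bm{\tau}_h,\bm{\uplambda}_h}$ of \eqref{eq: discrete new formulation}, set $\EEs_h:=\mathcal{E}_h\bm{\tau}_h$. We verify \eqref{eq: discrete old formulation} on $\mathcal{R}_h\paren{\YYs_h}$ and on $\XXs_{h,0}$ separately; by the direct sum and sesquilinearity this suffices. On test functions $\mathcal{R}_h\bm{\xi}_h$, we undo \eqref{eq: discrete Steklov--Poincare operator} and use $\gm_D^{\text{s}}\mathcal{R}_h\bm{\xi}_h=\bm{\xi}_h$. On $\XXs_{h,0}$, every term vanishes by \eqref{eq: discrete trace lifting operator} and the vanishing traces. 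The second equation is unchanged because $\gm_D^{\text{s}}\EEs_h=\bm{\tau}_h$.

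Equivalence in the sense of a one-to-one correspondence of solutions then follows from the injectivity of the map $\paren{\bm{\tau}_h,\bm{\uplambda}_h}\mapsto\paren{\mathcal{E}_h\bm{\tau}_h,\bm{\uplambda}_h}$. Combined with unique solvability of \eqref{eq: discrete old formulation}, this also shows \eqref{eq: discrete new formulation} is uniquely solvable.

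The main obstacle is the well-posedness of the discrete lifting \eqref{eq: discrete trace lifting operator}, which requires \Cref{assum: resonant frequency}(\ref{assum: resonant frequency:var}) and small $h$ via \cite[Thm.~5.7]{Hiptmair2002}. This is already provided by \Cref{lem: properties of E_h and S_h}. The only remaining care is that no uniform bound on $\mathcal{R}_h$ is needed, since part (ii) of that lemma allows an arbitrary $\mathcal{R}_h$.
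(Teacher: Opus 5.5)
Your proposal is correct and follows essentially the same route as the paper. You test with $\XXs_{h,0}$ to get $\EEs_h = \mathcal{E}_h\, \bm{\tau}_h$, split test functions via $\XXs_h = \mathcal{R}_h\paren{\YYs_h} \oplus \XXs_{h,0}$, and identify $a\paren{\mathcal{E}_h\, \bm{\tau}_h, \mathcal{R}_h\, \bm{\xi}_h}$ with the discrete Steklov--Poincar{\' e} pairing. You also note that $\paren{\bm{\tau}_h, \bm{\uplambda}_h}\mapsto\paren{\mathcal{E}_h\, \bm{\tau}_h, \bm{\uplambda}_h}$ is injective, so \eqref{eq: discrete new formulation} is itself uniquely solvable; this makes precise what the paper takes for granted when it speaks of ``the'' solution of the reformulated discrete problem.
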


\begin{proof}
    Choose $h_{\#}>0$ sufficiently small so that Problems \eqref{eq: discrete old formulation} and \eqref{eq: discrete trace lifting operator} are well-posed for all $h\in \paren{0, h_{\#}}$. Denote by $\paren{\EEs_h, \bm{\uplambda}_h}\in \XXs_h\times \YYs_h$ the solution of Problem \eqref{eq: discrete old formulation}, and let $\bm{\tau}_h = \gm_D^{\text{s}} \EEs_h$. Taking $\WWs_h \in \XXs_{h,0}$ in \eqref{eq: discrete old formulation} and using \eqref{eq: discrete trace lifting operator}, we obtain $\EEs_h = \mathcal{E}_h\, \bm{\tau}_h$, and hence $\EEs_h\in \XXs_h^{\mathcal{E}}$. Therefore, the trial space of Problem \eqref{eq: discrete old formulation} may be restricted to $\XXs_h^{\mathcal{E}}\times \YYs_h$. Next, consider an arbitrary discrete right inverse $\mathcal{R}_h$ of $\gm_D^{\text{s}}$. Let $\WWs_h\in \XXs_h$ be arbitrary, and denote $\bm{\xi}_h = \gm_D^{\text{s}} \WWs_h$.  Then $\WWs_h$ admits the decomposition $\WWs_h = \mathcal{R}_h\, \bm{\xi}_h + \paren{\WWs_h - \mathcal{R}_h\, \bm{\xi}_h}$, where $\mathcal{R}_h\, \bm{\xi}_h \in \mathcal{R}_h\paren{\YYs_h}$ and $ \WWs_h - \mathcal{R}_h\, \bm{\xi}_h \in \XXs_{h,0}$. Together with \eqref{eq: discrete trace lifting operator}, we see that it is sufficient to test Problem \eqref{eq: discrete old formulation} with pairs in $\mathcal{R}_h \paren{\YYs_h} \times \YYs_h$. Hence, Problem \eqref{eq: discrete old formulation} is equivalent to the following reduced formulation: Find $\paren{\mathcal{E}_h\, \bm{\tau}_h, \bm{\uplambda}_h}\in \XXs_h^{\mathcal{E}}\times \YYs_h$ such that
    $$
    \left\{\begin{alignedat}{3}
        &a\paren{\mathcal{E}_h\, \bm{\tau}_h, \mathcal{R}_h\, \bm{\xi}_h} + \om^2\varepsilon_0\inprod{\operatorname{V}_{\kappa}\paren{\gm_D^{\text{s}}\, \mathcal{E}_h\, \bm{\tau}_h}}{\gm_D^{\text{s}}\,  \overline{\mathcal{R}_h\, \bm{\xi}_h}}_{\times, \Gm} \ &+&\  \inprod{\paren{-\tfrac{1}{2}\operatorname{Id} + \operatorname{K}_\kappa}\bm{\uplambda}_h}{\gm_D^{\text{s}}\, \overline{\mathcal{R}_h\, \bm{\xi}_h}}_{\times, \Gm} \ &=&\  \ell_{1,\Gm}\paren{\bm{\xi}_h}, \\
        &\inprod{\paren{\tfrac{1}{2}\operatorname{Id} + \operatorname{K}_\kappa}\paren{\gm_D^{\text{s}}\, \mathcal{E}_h\, \bm{\tau}_h}}{\overline{\bm{\varphi}_h}}_{\times, \Gm} \ &+&\ \mu_0\inprod{\operatorname{V}_\kappa \bm{\uplambda}_h}{\overline{\bm{\varphi}_h}}_{\times, \Gm} \ &=&\ \ell_2\paren{\bm{\vphi}_h},
    \end{alignedat}\right.
    $$
    for all $\paren{\mathcal{R}_h\, \bm{\xi}_h, \bm{\varphi}_h}\in \mathcal{R}_h\paren{\YYs_h}\times \YYs_h$. Using \eqref{eq: discrete Steklov--Poincare operator}, this becomes Problem \eqref{eq: discrete new formulation}. Conversely, let $\paren{\bm{\tau}_h, \bm{\uplambda}_h}\in \YYs_h\times \YYs_h$ be the solution of Problem \eqref{eq: discrete new formulation}. Then by \eqref{eq: discrete trace lifting operator} and \eqref{eq: discrete Steklov--Poincare operator}, the pair $\paren{\mathcal{E}_h\, \bm{\tau}_h, \bm{\uplambda}_h}$ satisfies the reduced formulation above, and therefore also Problem \eqref{eq: discrete old formulation}. 
\end{proof}

Let $\bm{\mathcal{B}}_{\text{R}} : \paren{\YYs_h\times \YYs_h}\times \paren{\YYs_h\times \YYs_h}\to \C$ denote the sesquilinear form that corresponds to the left-hand side of \eqref{eq: discrete new formulation}. In view of \Cref{theo: discrete problems equivalence}, the inf-sup condition for $\bm{\mathcal{B}}_{\text{R}}$ follows directly from \eqref{eq: discrete old inf-sup}. Therefore, it remains to show that the associated inf-sup constant for $\bm{\mathcal{B}}_{\text{R}}$ is independent of $h$. To this end, we  recall the canonical product norm on the space $\HHs^{-1/2}_{\times}\paren{\operatorname{div}_{\Gm}, \Gm}\times \HHs^{-1/2}_{\times}\paren{\operatorname{div}_{\Gm}, \Gm}$:
$$
\norm{\paren{\bm{\zeta}, \bm{\eta}}}_{\Gm, \Gm}^2 := \norm{\bm{\zeta}}^2_{\HHs^{-1/2}_{\times}\paren{\operatorname{div}_{\Gm}, \Gm}} + \norm{\bm{\eta}}^2_{\HHs^{-1/2}_{\times}\paren{\operatorname{div}_{\Gm}, \Gm}} \qqq\forall \paren{\bm{\zeta}, \bm{\eta}}\in \HHs^{-1/2}_{\times}\paren{\operatorname{div}_{\Gm}, \Gm}\times \HHs^{-1/2}_{\times}\paren{\operatorname{div}_{\Gm}, \Gm}.
$$

\begin{theorem}
    \label{theo: discrete inf-sup condition}
    Under \Cref{assum: resonant frequency}, there exists a constant $C>0$ such that
    $$
    \sup_{\paren{\bm{\xi}_h, \bm{\varphi}_h} \in \YYs_h\times \YYs_h \setminus \left\{\bm{0}, \bm{0}\right\}} \dfrac{\abs{\bm{\mathcal{B}}_{\mathrm{R}}\paren{\paren{\bm{\tau}_h, \bm{\uplambda}_h}, \paren{\bm{\xi}_h, \bm{\varphi}_h}}}}{\norm{\paren{\bm{\xi}_h, \bm{\varphi}_h}}_{\Gm, \Gm}} \ge C\ \norm{\paren{\bm{\tau}_h, \bm{\uplambda}_h}}_{\Gm, \Gm} \qqq \forall \paren{\bm{\tau}_h, \bm{\uplambda}_h} \in \YYs_h\times \YYs_h,
    $$
    for all $h\in \paren{0, h_{\#}}$, where $h_{\#}>0$ is sufficiently small.
\end{theorem}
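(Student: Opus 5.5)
We derive the uniform inf-sup condition for $\bm{\mathcal{B}}_{\mathrm{R}}$ from the uniform discrete inf-sup condition \eqref{eq: discrete old inf-sup} for $\bm{\mathcal{B}}_{\mathrm{C}}$. The transfer between the two uses the uniformly bounded lifting $\mathcal{E}_h$ of \Cref{lem: properties of E_h and S_h}, together with a uniformly bounded discrete right inverse $\mathcal{R}_h$ of $\gm_D^{\text{s}}$ (e.g.\ from \cite[Thm.~2.1]{DVS2003}). Take $h_{\#}$ small enough that \eqref{eq: discrete old inf-sup} holds and \Cref{lem: properties of E_h and S_h} applies.

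Fix $(\bm{\tau}_h, \bm{\uplambda}_h)\in \YYs_h\times\YYs_h$ and set $\EEs_h := \mathcal{E}_h\,\bm{\tau}_h\in\XXs_h^{\mathcal{E}}$. Since $\gm_D^{\text{s}}$ is bounded and $\gm_D^{\text{s}}\EEs_h=\bm{\tau}_h$, and $\mathcal{E}_h$ is uniformly bounded, we have
$$
\norm{(\bm{\tau}_h,\bm{\uplambda}_h)}_{\Gm,\Gm}\le C\norm{(\EEs_h,\bm{\uplambda}_h)}_{\Om_{\text{s}},\Gm}\le C\norm{(\bm{\tau}_h,\bm{\uplambda}_h)}_{\Gm,\Gm}.
$$
By \eqref{eq: discrete old inf-sup}, there exists $(\WWs_h,\bm{\varphi}_h)\in\XXs_h\times\YYs_h$, nonzero, with
$$
\abs{\bm{\mathcal{B}}_{\mathrm{C}}((\EEs_h,\bm{\uplambda}_h),(\WWs_h,\bm{\varphi}_h))}\ge C\norm{(\EEs_h,\bm{\uplambda}_h)}_{\Om_{\text{s}},\Gm}\norm{(\WWs_h,\bm{\varphi}_h)}_{\Om_{\text{s}},\Gm}.
$$

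Next set $\bm{\xi}_h:=\gm_D^{\text{s}}\WWs_h$. The function $\WWs_h-\mathcal{R}_h\bm{\xi}_h$ lies in $\XXs_{h,0}$, so \eqref{eq: discrete trace lifting operator} gives $a(\EEs_h,\WWs_h)=a(\mathcal{E}_h\bm{\tau}_h,\mathcal{R}_h\bm{\xi}_h)=\inprod{\mathcal{S}_h\bm{\tau}_h}{\overline{\bm{\xi}_h}}_{\YYs_h^\prime,\YYs_h}$. All the boundary terms of $\bm{\mathcal{B}}_{\mathrm{C}}$ depend on $\WWs_h$ only through $\bm{\xi}_h$. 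Hence, as in the proof of \Cref{theo: discrete problems equivalence},
$$
\bm{\mathcal{B}}_{\mathrm{C}}((\EEs_h,\bm{\uplambda}_h),(\WWs_h,\bm{\varphi}_h))=\bm{\mathcal{B}}_{\mathrm{R}}((\bm{\tau}_h,\bm{\uplambda}_h),(\bm{\xi}_h,\bm{\varphi}_h)).
$$
Moreover, $\norm{(\bm{\xi}_h,\bm{\varphi}_h)}_{\Gm,\Gm}\le C\norm{(\WWs_h,\bm{\varphi}_h)}_{\Om_{\text{s}},\Gm}$ by continuity of the trace. This continuity bound is the direction we need, since a smaller test norm only enlarges the quotient.

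Dividing the inf-sup estimate by $\norm{(\bm{\xi}_h,\bm{\varphi}_h)}_{\Gm,\Gm}$ gives
$$
\frac{\abs{\bm{\mathcal{B}}_{\mathrm{R}}((\bm{\tau}_h,\bm{\uplambda}_h),(\bm{\xi}_h,\bm{\varphi}_h))}}{\norm{(\bm{\xi}_h,\bm{\varphi}_h)}_{\Gm,\Gm}}\ge C\norm{(\EEs_h,\bm{\uplambda}_h)}_{\Om_{\text{s}},\Gm}\ge C\norm{(\bm{\tau}_h,\bm{\uplambda}_h)}_{\Gm,\Gm},
$$
which is the claimed bound. The main subtlety is the case $(\bm{\xi}_h,\bm{\varphi}_h)=(\bm{0},\bm{0})$ while $\WWs_h\neq\bm 0$, i.e.\ $\WWs_h\in\XXs_{h,0}$. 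In that case the left side of the inf-sup estimate equals $a(\EEs_h,\WWs_h)=0$, because $\EEs_h\in\XXs_h^{\mathcal{E}}$ and the boundary terms vanish. This forces $(\EEs_h,\bm{\uplambda}_h)=\bm 0$, and the statement is trivial. The remaining care lies in ensuring that every constant (those of $\mathcal{E}_h$, of the trace, and of \eqref{eq: discrete old inf-sup}) is $h$-independent, which the cited results provide.
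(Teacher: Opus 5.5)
Your proof is correct and follows essentially the paper's argument. Both transfer the Costabel inf-sup condition \eqref{eq: discrete old inf-sup} to $\bm{\mathcal{B}}_{\mathrm{R}}$ through the identity $\bm{\mathcal{B}}_{\mathrm{C}}((\mathcal{E}_h\bm{\tau}_h,\bm{\uplambda}_h),(\WWs_h,\bm{\varphi}_h))=\bm{\mathcal{B}}_{\mathrm{R}}((\bm{\tau}_h,\bm{\uplambda}_h),(\gm_D^{\text{s}}\WWs_h,\bm{\varphi}_h))$ and the continuity of $\gm_D^{\text{s}}$; the only difference is that you pick a (near-)optimal test pair directly, whereas the paper first reduces the supremum to $\mathcal{R}_h(\YYs_h)\times\YYs_h$. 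Note also that your chain only uses the lower bound $\norm{(\bm{\tau}_h,\bm{\uplambda}_h)}_{\Gm,\Gm}\le C\norm{(\EEs_h,\bm{\uplambda}_h)}_{\Om_{\text{s}},\Gm}$ from trace continuity, so the uniform bounds on $\mathcal{E}_h$ and $\mathcal{R}_h$ that you invoke are not actually needed for this statement.
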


\begin{proof}
    By \eqref{eq: discrete old inf-sup} and the inclusion $\XXs_h^{\mathcal{E}}\subset\XXs_h$, there exists $h_{\#}>0$ such that for all $h\in \paren{0, h_{\#}}$, the following inequality holds:
    \begin{equation}
        \label{eq: discrete inf-sup condition 1}
        C \le \inf_{\paren{\EEs_h, \bm{\uplambda}_h}\in \XXs_h^{\mathcal{E}}\times \YYs_h \setminus\left\{\bm{0}, \bm{0}\right\}} \sup_{\paren{\WWs_h, \bm{\varphi}_h}\in \XXs_h\times \YYs_h \setminus\left\{\bm{0}, \bm{0}\right\}} \dfrac{\abs{\bm{\mathcal{B}}_{\text{C}}\paren{\paren{\EEs_h, \bm{\uplambda}_h}, \paren{\WWs_h, \bm{\varphi}_h}}}}{\norm{\paren{\EEs_h, \bm{\uplambda}_h}}_{\Om_{\text{s}}, \Gm} \norm{\paren{\WWs_h, \bm{\varphi}_h}}_{\Om_{\text{s}}, \Gm}}.
    \end{equation}
    From the proof of \Cref{theo: discrete problems equivalence}, every $\EEs_h \in \XXs_h^{\mathcal{E}}$ can be written as $\EEs_h =\mathcal{E}_h\, \bm{\tau}_h$, where $\bm{\tau}_h = \gm_D^{\text{s}} \EEs_h$. Consider a uniformly bounded discrete right inverse $\mathcal{R}_h$ of $\gm_D^{\text{s}}$. Then for any $\WWs_h\in \XXs_h$, if $\bm{\xi}_h = \gm_D^{\text{s}}\WWs_h$, then $\WWs_h - \mathcal{R}_h\, \bm{\xi}_h\in \XXs_{h,0}$. By using \eqref{eq: discrete trace lifting operator}, we hence obtain
    $$
    a\paren{\EEs_h, \WWs_h - \mathcal{R}_h\, \bm{\xi}_h} = a\paren{\mathcal{E}_h\, \bm{\tau}_h, \WWs_h - \mathcal{R}_h\, \bm{\xi}_h} = 0,
    $$
    which leads to
    $$
    \bm{\mathcal{B}}_{\text{C}}\paren{\paren{\EEs_h, \bm{\uplambda}_h}, \paren{\WWs_h, \bm{\varphi}_h}}=\bm{\mathcal{B}}_{\text{C}}\paren{\paren{\mathcal{E}_h\, \bm{\tau}_h, \bm{\uplambda}_h}, \paren{\mathcal{R}_h\, \bm{\xi}_h, \bm{\varphi}_h}}.
    $$
    By using the uniform boundedness of $\mathcal{R}_h$ and the continuity of $\gm_D^{\text{s}}$, we have
    $$
    \norm{\paren{\mathcal{R}_h\, \bm{\xi}_h, \bm{\varphi}_h}}_{\Om_{\text{s}}, \Gm} \le C \norm{\paren{\bm{\xi}_h, \bm{\vphi}_h}}_{\Gm, \Gm} \le C\norm{\paren{\WWs_h, \bm{\varphi}_h}}_{\Om_{\text{s}}, \Gm} \qqq \forall \bm{\varphi}_h\in \YYs_h.
    $$
    It follows that
    \begin{equation}
        \label{eq: discrete inf-sup condition 2}
        \sup_{\paren{\WWs_h, \bm{\varphi}_h}\in \XXs_h\times \YYs_h \setminus\left\{\bm{0}, \bm{0}\right\}} \dfrac{\abs{\bm{\mathcal{B}}_{\text{C}}\paren{\paren{\EEs_h, \bm{\uplambda}_h}, \paren{\WWs_h, \bm{\varphi}_h}}}}{\norm{\paren{\EEs_h, \bm{\uplambda}_h}}_{\Om_{\text{s}}, \Gm} \norm{\paren{\WWs_h, \bm{\varphi}_h}}_{\Om_{\text{s}}, \Gm}} \le C \sup_{\paren{\mathcal{R}_h\, \bm{\xi}_h, \bm{\varphi}_h}\in \mathcal{R}_h\paren{\YYs_h}\times \YYs_h \setminus\left\{\bm{0}, \bm{0}\right\}} \dfrac{\abs{\bm{\mathcal{B}}_{\text{C}}\paren{\paren{\mathcal{E}_h\, \bm{\tau}_h, \bm{\uplambda}_h}, \paren{\mathcal{R}_h\, \bm{\xi}_h, \bm{\varphi}_h}}}}{\norm{\paren{\mathcal{E}_h\, \bm{\tau}_h, \bm{\uplambda}_h}}_{\Om_{\text{s}}, \Gm} \norm{\paren{\mathcal{R}_h\, \bm{\xi}_h, \bm{\varphi}_h}}_{\Om_{\text{s}}, \Gm}},
    \end{equation}
    for all $\paren{\EEs_h, \bm{\uplambda}_h}\in \XXs_h^{\mathcal{E}}\times \YYs_h \setminus \left\{\bm{0}, \bm{0}\right\}$. From the proof of \Cref{theo: discrete problems equivalence}, we have
    $$
    \bm{\mathcal{B}}_{\text{C}}\paren{\paren{\mathcal{E}_h\, \bm{\tau}_h, \bm{\uplambda}_h}, \paren{\mathcal{R}_h\, \bm{\xi}_h, \bm{\varphi}_h}} = \bm{\mathcal{B}}_{\text{R}}\paren{\paren{\bm{\tau}_h, \bm{\uplambda}_h}, \paren{\bm{\xi}_h, \bm{\varphi}_h}} \qqq \forall \paren{\bm{\tau}_h, \bm{\uplambda}_h}, \paren{\bm{\xi}_h, \bm{\varphi}_h} \in \YYs_h\times \YYs_h.
    $$
    Moreover, the maps $\paren{\bm{\tau}_h, \bm{\uplambda}_h} \mapsto \paren{\mathcal{E}_h\, \bm{\tau}_h, \bm{\uplambda}_h}$ and $\paren{\bm{\xi}_h, \bm{\vphi}_h} \mapsto \paren{\mathcal{R}_h\, \bm{\xi}_h, \bm{\varphi}_h}$ from $\YYs_h\times \YYs_h$ into $\XXs_h^{\mathcal{E}}\times \YYs_h$ and $\mathcal{R}_h\paren{\YYs_h}\times \YYs_h$, respectively, are bijective. Together with the continuity of $\gm_D^{\text{s}}$, we finally obtain
    \begin{align*}
        &\inf_{\paren{\mathcal{E}_h\, \bm{\tau}_h, \bm{\uplambda}_h}\in \XXs_h^{\mathcal{E}}\times \YYs_h \setminus\left\{\bm{0}, \bm{0}\right\}} \sup_{\paren{\mathcal{R}_h\, \bm{\xi}_h, \bm{\varphi}_h}\in \mathcal{R}_h\paren{\YYs_h}\times \YYs_h \setminus\left\{\bm{0}, \bm{0}\right\}} \dfrac{\abs{\bm{\mathcal{B}}_{\text{C}}\paren{\paren{\mathcal{E}_h\, \bm{\tau}_h, \bm{\uplambda}_h}, \paren{\mathcal{R}_h\, \bm{\xi}_h, \bm{\varphi}_h}}}}{\norm{\paren{\mathcal{E}_h\, \bm{\tau}_h, \bm{\uplambda}_h}}_{\Om_{\text{s}}, \Gm} \norm{\paren{\mathcal{R}_h\, \bm{\xi}_h, \bm{\varphi}_h}}_{\Om_{\text{s}}, \Gm}} \\
        &\qq = \inf_{\paren{\bm{\tau}_h, \bm{\uplambda}_h}\in \YYs_h\times \YYs_h \setminus\left\{\bm{0}, \bm{0}\right\}} \sup_{\paren{\bm{\xi}_h, \bm{\varphi}_h}\in \YYs_h\times \YYs_h \setminus\left\{\bm{0}, \bm{0}\right\}} \dfrac{\abs{\bm{\mathcal{B}}_{\text{R}}\paren{\paren{\bm{\tau}_h, \bm{\uplambda}_h}, \paren{\bm{\xi}_h, \bm{\varphi}_h}}}}{\norm{\paren{\mathcal{E}_h\, \bm{\tau}_h, \bm{\uplambda}_h}}_{\Om_{\text{s}}, \Gm} \norm{\paren{\mathcal{R}_h\, \bm{\xi}_h, \bm{\varphi}_h}}_{\Om_{\text{s}}, \Gm}} \\
        &\qq \le C \inf_{\paren{\bm{\tau}_h, \bm{\uplambda}_h}\in \YYs_h\times \YYs_h \setminus\left\{\bm{0}, \bm{0}\right\}} \sup_{\paren{\bm{\xi}_h, \bm{\varphi}_h}\in \YYs_h\times \YYs_h \setminus\left\{\bm{0}, \bm{0}\right\}} \dfrac{\abs{\bm{\mathcal{B}}_{\text{R}}\paren{\paren{\bm{\tau}_h, \bm{\uplambda}_h}, \paren{\bm{\xi}_h, \bm{\varphi}_h}}}}{\norm{\paren{\bm{\tau}_h, \bm{\uplambda}_h}}_{\Gm, \Gm} \norm{\paren{\bm{\xi}_h, \bm{\varphi}_h}}_{\Gm, \Gm}}.
    \end{align*}
    By combining \eqref{eq: discrete inf-sup condition 1}, the inequality obtained by taking the infimum over $\paren{\EEs_h, \bm{\uplambda}_h}\in \XXs_h^{\mathcal{E}}\times \YYs_h \setminus \left\{\bm{0}, \bm{0}\right\}$ in \eqref{eq: discrete inf-sup condition 2}, and the last estimate, we finish the proof.
\end{proof}

\section{Operator preconditioning}
\label{sec: operator preconditioning}

This section is devoted to the matrix representation of Problem \eqref{eq: discrete old formulation}. We then introduce a preconditioner for the resulting discrete system following the operator preconditioning framework \cite[Sect.~2]{Hiptmair2006}. For fixed $h\in \paren{0, h_{\#}}$, let $\left\{\bm{\psi}_1, \bm{\psi}_2, \ldots, \bm{\psi}_N\right\}$ be a basis of $\XXs_h$, where $N := \dim \XXs_h$. Since each basis function is associated with a unique edge of the mesh, we split the index set into
$$
\mathcal{J}_{\mathbf{I}}:= \left\{i \mid e_i \not\subset \Gm\right\}, \qqq \mathcal{J}_{\mathbf{B}}:= \left\{i\mid e_i \subset \Gm\right\}.
$$
With respect to this decomposition, the Galerkin matrix associated with $a\paren{\cdot, \cdot}$ can be written as
$$
\mathbf{A} = \begin{pmatrix}
    \mathbf{A}_{\mathbf{II}} & \mathbf{A}_{\mathbf{IB}} \\
    \mathbf{A}_{\mathbf{BI}} & \mathbf{A}_{\mathbf{BB}}
\end{pmatrix},
$$
where
$$
\left[\mathbf{A}_{\sigma \varsigma}\right]_{kl}:= a\paren{\bm{\psi}_l, \bm{\psi}_k}, \qqq \sigma, \varsigma \in \left\{\mathbf{I}, \mathbf{B}\right\},\ k \in \mathcal{J}_{\mathcal{\sigma}}, l \in \mathcal{J}_{\mathcal{\varsigma}}.
$$

By construction, we have $\XXs_{h,0} = \operatorname{span}\left\{\bm{\psi}_i \mid i\in \mathcal{J}_{\mathbf{I}}\right\}$ and $\XXs_h = \XXs_{h,0}\oplus \operatorname{span}\left\{\bm{\psi}_i \mid i\in \mathcal{J}_{\mathbf{B}}\right\}$. Moreover, since $\YYs_h = \gm_D^{\text{s}} \XXs_h$, the set $\left\{\gm_D^{\text{s}}\, \bm{\psi}_i \mid i\in \mathcal{J}_{\mathbf{B}}\right\}$ forms a basis of $\YYs_h$. Hence, for $\bm{\zeta}_h \in \YYs_h$ with coefficient vector $\widehat{\bm{\zeta}} \in \C^{\, \abs{\mathcal{J}_{\mathbf{B}}}}$, the solution $\mathcal{E}_h\, \bm{\zeta}_h\in \XXs_h$ of Problem \eqref{eq: discrete trace lifting operator} has the coefficient vector $\paren{\widehat{\UUs}_{\bm{\zeta}},\ \widehat{\bm{\zeta}}}^\top \in \C^N$, where $\widehat{\UUs}_{\bm{\zeta}}\in \C^{\, \abs{\mathcal{J}_{\mathbf{I}}}}$ collects the coefficients of all $\bm{\psi}_i\ \paren{i \in \mathcal{J}_{\mathbf{I}}}$. In other words, we have
\begin{equation}
    \label{eq: coordinate representation}
    \mathcal{E}_h\, \bm{\zeta}_h = \sum_{i\in \mathcal{J}_{\mathbf{I}}} \mathrm{U}_i\, \bm{\psi}_i + \sum_{i\in \mathcal{J}_{\mathbf{B}}} \zeta_i\,  \bm{\psi}_i, \qqq \widehat{\UUs}_{\bm{\zeta}} = \paren{\mathrm{U}_i}_{i\in \mathcal{J}_{\mathbf{I}}},\  \widehat{\bm{\zeta}} = \paren{\zeta_i}_{i\in \mathcal{J}_{\mathbf{B}}}.
\end{equation}
By substituting this back into \eqref{eq: discrete trace lifting operator}, we hence obtain
\begin{equation}
    \label{eq: matrix trace lifting equation}
    \mathbf{A}_{\mathbf{II}}\, \widehat{\UUs}_{\bm{\zeta}} + \mathbf{A}_{\mathbf{IB}}\, \widehat{\bm{\zeta}} = \bm{0}.
\end{equation}
Assume that \Cref{assum: resonant frequency}(\ref{assum: resonant frequency:var}) is satisfied. The well-posedness of Problem \eqref{eq: discrete trace lifting operator} implies that $A_{\mathbf{II}}$ is invertible, and hence
$$
\widehat{\UUs}_{\bm{\zeta}} = - \mathbf{A}_{\mathbf{II}}^{-1}\, \mathbf{A}_{\mathbf{IB}}\, \widehat{\bm{\zeta}}.
$$
Now let $\bm{\eta}_h\in \YYs_h$ have coefficient vector $\widehat{\bm{\eta}}\in \C^{\, \abs{\mathcal{J}_{\mathbf{B}}}}$. Since the definition of $\mathcal{S}_h$ does not depend on the choice of $\mathcal{R}_h$, we may take $\mathcal{R}_h = \mathcal{E}_h$ in \eqref{eq: discrete Steklov--Poincare operator}. Using the above expression for $\widehat{\UUs}_{\bm{\zeta}}$, we deduce that
$$
\inprod{\mathcal{S}_h\, \bm{\zeta}_h}{\overline{\bm{\eta}_h}}_{\YYs_h^\prime,\, \YYs_h} = a\paren{\mathcal{E}_h\, \bm{\zeta}_h, \mathcal{E}_h\, \bm{\eta}_h} = \widehat{\bm{\eta}}^{\, \ast}\paren{\mathbf{A}_{\mathbf{BB}} - \mathbf{A}_{\mathbf{BI}}\, \mathbf{A}_{\mathbf{II}}^{-1}\, \mathbf{A}_{\mathbf{IB}}}\widehat{\bm{\zeta}}.
$$
As a result, the Galerkin matrix of the sesquilinear form induced by $\mathcal{S}_h$ is the Schur complement of $\mathbf{A}_{\mathbf{II}}$ in $\mathbf{A}$:
$$
\mathbf{S}_{\text{sc}}:= \mathbf{A}_{\mathbf{BB}} - \mathbf{A}_{\mathbf{BI}}\, \mathbf{A}_{\mathbf{II}}^{-1}\, \mathbf{A}_{\mathbf{IB}}.
$$
We next define the Galerkin matrices
$$
\left[\mathbf{K}_{\pm}\right]_{kl} := \inprod{\paren{\pm \tfrac{1}{2}\operatorname{Id} + \operatorname{K}_\kappa}\gm_D^{\text{s}}\, \bm{\psi}_l}{\gm_D^{\text{s}}\, \overline{\bm{\psi}_k}}_{\times, \Gm}, \qqq \left[\mathbf{V}\right]_{kl} := \inprod{\operatorname{V}_\kappa \gm_D^{\text{s}}\, \bm{\psi}_l}{\gm_D^{\text{s}}\, \overline{\bm{\psi}_k}}_{\times, \Gm},
$$
and the load vectors
\begin{align*}
    \left[\mathbf{l}_1\right]_k &:= \om^2\varepsilon_0\inprod{\operatorname{V}_{\kappa}\paren{\gm_D^0 \EEs^{\text{inc}}}}{\gm_D^{\text{s}}\, \overline{\bm{\psi}_k}}_{\times, \Gm} + \inprod{\gm_N^0 \EEs^{\text{inc}}}{\gm_D^{\text{s}}\, \overline{\bm{\psi}_k}}_{\times, \Gm}, \\
    \left[\mathbf{l}_2\right]_k &:= \inprod{\paren{\tfrac{1}{2}\operatorname{Id} + \operatorname{K}_\kappa}\paren{\gm_D^0 \EEs^{\text{inc}}}}{\gm_D^{\text{s}}\, \overline{\bm{\psi}_k}}_{\times, \Gm},
\end{align*}
for $k, l \in \left\{1,2,\ldots, \abs{\mathcal{J}_{\mathbf{B}}}\right\}$. Let $\paren{\widehat{\bm{\tau}},\ \widehat{\bm{\uplambda}}}^\top \in \C^{\, 2\abs{\mathcal{J}_{\mathbf{B}}}}$ denote the coefficient vector of the solution $\paren{\bm{\tau}_h, \bm{\uplambda}_h}\in \YYs_h\times \YYs_h$ of Problem \eqref{eq: discrete new formulation}. Then the resulting discrete system reads as
\begin{equation}
    \label{eq: matrix equation}
    \mathbb{B}_{\text{R}}\begin{pmatrix}
        \widehat{\bm{\tau}}\\
        \widehat{\bm{\uplambda}}
    \end{pmatrix} = \begin{pmatrix}
        \mathbf{l}_1\\
        \mathbf{l}_2
    \end{pmatrix}, \qqq \mathbb{B}_{\text{R}} := \begin{pmatrix}
        \mathbf{S}_{\text{sc}} + \om^2\varepsilon_0\, \mathbf{V} & \mathbf{K}_{-}\\
        \mathbf{K}_{+} & \mu_0\, \mathbf{V}
    \end{pmatrix}.
\end{equation}
Under \Cref{assum: resonant frequency}, there exists $h_{\#}>0$ such that for all $h\in \paren{0, h_{\#}}$, this system is uniquely solvable.

Now, recall from \cite[Sect.~2]{BC2007} the Buffa--Christiansen space $\ZZs_h\subset \HHs^{-1/2}\paren{\operatorname{div}_\Gm, \Gm}$ constructed on the barycentric refinement of $\Gm_h$. Note that $\operatorname{dim} \ZZs_h = \operatorname{dim} \YYs_h$ for all $h>0$. Denote by $\mathbb{V}$ and $\mathbb{D}$ the Galerkin matrices associated with the following sesquilinear forms:
\begin{equation}
    \label{eq: sesquilinear forms for preconditioners}
    \begin{alignedat}{2}
        &\bm{\mathcal{V}} : \paren{\ZZs_h\times \ZZs_h}\times \paren{\ZZs_h\times \ZZs_h}\to \C, &\qq &\bm{\mathcal{V}}\paren{\paren{\bm{\rho}_h, \bm{\vartheta}_h}, \paren{\bm{\iota}_h, \bm{\chi}_h}}:= \inprod{\operatorname{V}_\kappa \bm{\rho}_h}{\, \overline{\bm{\iota}_h}}_{\times, \Gm} + \inprod{\operatorname{V}_\kappa \bm{\vartheta}_h}{\, \overline{\bm{\chi}_h}}_{\times, \Gm},\\
        &\bm{\mathcal{D}} : \paren{\YYs_h\times \YYs_h}\times \paren{\ZZs_h\times \ZZs_h}\to \C, &\qq &\bm{\mathcal{D}}\paren{\paren{\bm{\zeta}_h, \bm{\eta}_h}, \paren{\bm{\iota}_h, \bm{\chi}_h}}:= \inprod{\bm{\zeta}_h}{\, \overline{\bm{\iota}_h}}_{\times, \Gm} + \inprod{\bm{\eta}_h}{\, \overline{\bm{\chi}_h}}_{\times, \Gm},
    \end{alignedat}
\end{equation}
we then propose a block-diagonal preconditioner $\mathbb{P}$ for the matrix $\mathbb{B}_{\text{R}}$ in \eqref{eq: matrix equation}:
$$
\mathbb{P}:= \mathbb{D}^{-1}\, \mathbb{V}\, \mathbb{D}^{-\top}.
$$
This choice is motivated by the regularization of the Steklov--Poincar{\' e} operator via the operator $\operatorname{V}_\kappa$ on smooth surfaces \cite[Thm.~2.2]{DZH+2015}, and by operator preconditioning for the discrete system arising from a Galerkin discretization of the electric field integral equation on Lipschitz surfaces \cite[Prop.~4.4]{BC2007}. 

For a square matrix $\mathbf{M}$, we define the spectral condition number $\operatorname{cond}\paren{\mathbf{M}}$ as the ratio between the maximum and minimum moduli of its eigenvalues. We have the following result:

\begin{theorem}
    \label{theo: operator precondition}
    Assume that \Cref{assum: resonant frequency} is satisfied. Then there exists $h_{\#}>0$ such that for all $h\in \paren{0, h_{\#}}$, the following inequality holds:
    $$
    \operatorname{cond}\paren{\mathbb{PB}_{\mathrm{R}}} \le C.
    $$
\end{theorem}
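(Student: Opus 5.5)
We will apply the abstract operator preconditioning result \cite[Thm.~2.1]{Hiptmair2006}. It states that if three sesquilinear forms are uniformly continuous and uniformly discrete inf-sup stable on discrete spaces of equal dimension, then the spectral condition number of $\mathbb{D}^{-1}\mathbb{V}\mathbb{D}^{-\top}\mathbb{B}_{\mathrm{R}}$ is bounded by a product of their continuity and inf-sup constants. The three forms are $\bm{\mathcal{B}}_{\mathrm{R}}$ on $\YYs_h\times\YYs_h$, the form $\bm{\mathcal{V}}$ on $\ZZs_h\times\ZZs_h$, and the pairing $\bm{\mathcal{D}}$ between $\YYs_h\times\YYs_h$ and $\ZZs_h\times\ZZs_h$. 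The spaces are all normed by the product $\HHs^{-1/2}_\times(\operatorname{div}_\Gm,\Gm)$-norm $\norm{\cdot}_{\Gm,\Gm}$. The dimensions match since $\dim\ZZs_h=\dim\YYs_h$. Since $\operatorname{cond}$ is the ratio of extreme eigenvalue moduli, it is bounded by the ratio of the norm of the preconditioned operator to the norm of its inverse, measured in the Euclidean norm weighted by the Gram matrix. This is exactly what the framework controls.

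The first step is uniform continuity of $\bm{\mathcal{B}}_{\mathrm{R}}$. The blocks involving $\operatorname{V}_\kappa$ and $\pm\tfrac12\operatorname{Id}+\operatorname{K}_\kappa$ are continuous by their mapping properties and \Cref{theo: self duality}. For the $\mathcal{S}_h$ block, take $\mathcal{R}_h$ uniformly bounded, as allowed by \Cref{lem: properties of E_h and S_h}(ii). Then $|\inprod{\mathcal{S}_h\bm{\tau}_h}{\overline{\bm{\xi}_h}}|=|a(\mathcal{E}_h\bm{\tau}_h,\mathcal{R}_h\bm{\xi}_h)|\le C\norm{\bm{\tau}_h}\norm{\bm{\xi}_h}$ by \Cref{lem: properties of E_h and S_h}(i) and the continuity of $a$. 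The uniform inf-sup condition for $\bm{\mathcal{B}}_{\mathrm{R}}$ is \Cref{theo: discrete inf-sup condition}. Since the spaces have equal finite dimension, the transposed inf-sup condition follows automatically with the same constant.

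Next, $\bm{\mathcal{D}}$ and $\bm{\mathcal{V}}$ decouple into two identical copies, so it suffices to treat one block of each. Continuity of $\inprod{\cdot}{\cdot}_{\times,\Gm}$ comes from \Cref{theo: self duality}, and continuity of $\operatorname{V}_\kappa$ is standard. The uniform inf-sup condition of the pairing between $\YYs_h$ and the Buffa--Christiansen space $\ZZs_h$ in the $\HHs^{-1/2}_\times(\operatorname{div}_\Gm,\Gm)$-norm is \cite[Thm.~3.6]{BC2007}, valid on quasi-uniform meshes of Lipschitz polyhedra. For $\bm{\mathcal{V}}$ on $\ZZs_h$ we use the fact that $\operatorname{V}_\kappa$ satisfies a generalized Gårding inequality. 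This follows from the stable Hodge-type splitting of $\HHs^{-1/2}_\times(\operatorname{div}_\Gm,\Gm)$ and its discrete counterpart, which holds for the Buffa--Christiansen spaces and is exactly what underlies \cite[Prop.~4.4]{BC2007}. Combined with injectivity of $\operatorname{V}_\kappa$, which requires $\kappa$ not to be an interior Maxwell eigenvalue, i.e.\ \Cref{assum: resonant frequency}(\ref{assum: resonant frequency:const}), this gives a uniform discrete inf-sup condition for $h<h_\#$ via the usual compact-perturbation argument.

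The main obstacle is the last point: the discrete inf-sup stability of $\operatorname{V}_\kappa$ on $\ZZs_h$ uniformly in $h$. Here $\operatorname{V}_\kappa$ is indefinite and its Gårding form requires a discrete Hodge decomposition with uniformly stable (gap-property) projections. Our plan is to invoke \cite[Prop.~4.4]{BC2007} directly, since it establishes precisely this stability for barycentric-refined Buffa--Christiansen elements. We then check that its hypotheses coincide with \Cref{assum: resonant frequency}(\ref{assum: resonant frequency:const}), possibly after shrinking $h_\#$. Collecting the constants in \cite[Thm.~2.1]{Hiptmair2006} then yields $\operatorname{cond}(\mathbb{P}\mathbb{B}_{\mathrm{R}})\le C$ independently of $h$.
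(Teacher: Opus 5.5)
Your proposal is correct and follows the paper's route: \cite[Thm.~2.1]{Hiptmair2006} applied with the uniform inf-sup conditions for $\bm{\mathcal{B}}_{\mathrm{R}}$ (\Cref{theo: discrete inf-sup condition}), for the pairing $\bm{\mathcal{D}}$ between $\YYs_h$ and the Buffa--Christiansen space (from \cite{BC2007}), and for $\bm{\mathcal{V}}$ on $\ZZs_h$. For $\bm{\mathcal{V}}$ the paper appeals to the proof of \cite[Thm.~15]{BH2003} rather than \cite[Prop.~4.4]{BC2007}. Your explicit check that the $h$-dependent block $\mathcal{S}_h$ is uniformly continuous, via uniformly bounded $\mathcal{E}_h$ and $\mathcal{R}_h$, is a step the paper leaves implicit. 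One small correction: the spectral condition number is bounded by the \emph{product} $\|T\|\,\|T^{-1}\|$ of the operator norm and the norm of its inverse, not by their ratio.
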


\begin{proof}
    From the proof of \cite[Thm.~15]{BH2003}, it follows directly that $\bm{\mathcal{V}}$ satisfies a uniform discrete inf-sup condition for sufficiently small $h \in \paren{0, h_{\#}}$. Using \cite[Prop.~3.14]{BC2007}, we obtain the uniform inf-sup stability of $\bm{\mathcal{D}}$ for all $h>0$. Combined with \Cref{theo: discrete inf-sup condition}, the result is now a consequence of \cite[Thm.~2.1]{Hiptmair2006}.
\end{proof}

\begin{remark}
    \label{rem: other preconditioners}
    As discussed above, the particular choice of $\bm{\mathcal{V}}$ and $\bm{\mathcal{D}}$ in \eqref{eq: sesquilinear forms for preconditioners} is motivated by \cite{DZH+2015, BC2007}. However, the proof of \Cref{theo: operator precondition} does not rely on the specific structure of these forms, but only on their uniform stability. Consequently, the same argument remains valid for any preconditioner defined through sesquilinear forms enjoying the same stability properties.
\end{remark}

\begin{remark}
    \label{rem: advantage of the method}
    The proof of \Cref{theo: discrete problems equivalence} shows that if $\paren{\bm{\tau}_h, \bm{\uplambda}_h}$ solves Problem \eqref{eq: discrete new formulation}, then $\paren{\mathcal{E}_h\, \bm{\tau}_h, \bm{\uplambda}_h}$ solves Problem \eqref{eq: discrete old formulation}. At the algebraic level, this corresponds to first solving \eqref{eq: matrix equation}, and then recovering $\mathcal{E}_h\, \bm{\tau}_h$ from \eqref{eq: coordinate representation}--\eqref{eq: matrix trace lifting equation}. The main computational burden lies in \eqref{eq: matrix equation}, since it contains dense BEM blocks, whereas \eqref{eq: matrix trace lifting equation} is a sparse FEM system. Available FEM solvers, including direct solvers \cite{DD1997, ADLK2001} and preconditioned iterative methods \cite{Hiptmair1999, GP2003, QDA+2026}, can be used both to apply the Schur complement $\mathbf{S}_{\mathrm{sc}}$ and to solve \eqref{eq: matrix trace lifting equation}. Therefore, preconditioning of \eqref{eq: matrix equation} provides a practical strategy to solve the original problem. 
\end{remark}

\section{Numerical results}
\label{sec: numerical results}

In this section, we present numerical experiments to illustrate the theoretical findings. In all examples, the exterior domain $\Om_0$ is filled with air, with permeability $\mu_0 = 4\pi\times 10^{-7}\ \mathrm{H}/\mathrm{m}$ and permittivity $\varepsilon_0 = 8.85\times 10^{-12}\ \mathrm{F}/\mathrm{m}$. We choose the angular frequency $\om = 6.28\times 10^8\ \mathrm{rad}/\mathrm{s}$. 

For a range of mesh sizes, we compute the condition numbers of the matrices $\mathbb{B}_{\text{C}}$, $\mathbb{B}_{\text{R}}$, and $\mathbb{PB}_{\mathrm{R}}$, where $\mathbb{B}_{\text{C}}$ denotes the Galerkin matrix of the discrete Costabel coupling \eqref{eq: discrete old formulation}. To complement the condition number study, we also report the corresponding GMRES iteration counts, where GMRES \cite{SS1986} is used with both absolute and relative tolerances set to $10^{-9}$. The comparison between $\mathbb{B}_{\text{C}}$ and $\mathbb{B}_{\text{R}}$ illustrates the effect of the reformulation, while that between $\mathbb{B}_{\text{R}}$ and $\mathbb{PB}_{\mathrm{R}}$ demonstrates the efficiency of the preconditioner. All experiments are carried out in BEAST.jl \cite{Kristof2026}. In particular, in the evaluation of $\mathbf{S}_{\text{sc}}$, the auxiliary system with coefficient matrix $\mathbf{A}_{\mathbf{II}}$ is solved using a sparse LU factorization \cite{DD1997}. 

\begin{figure}[!h]
    \centering
    \begin{subfigure}[c]{0.44\textwidth}
        \centering
        \raisebox{-0.5\height}{%
            \includegraphics[trim={2cm 2cm 2cm 2cm},clip,width=0.87\linewidth]{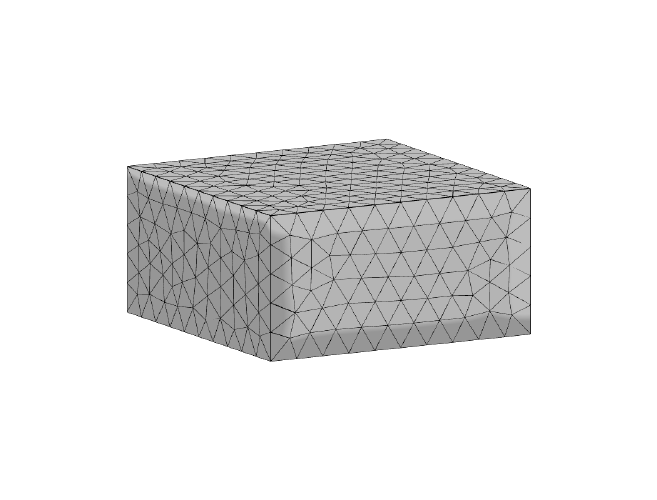}
        }
    \end{subfigure}
    \hfill
    \begin{subfigure}[c]{0.53\textwidth}
        \centering
        \raisebox{-0.5\height}{%
            \includegraphics[trim={34cm 29cm 34cm 17cm},clip,width=0.72\linewidth]{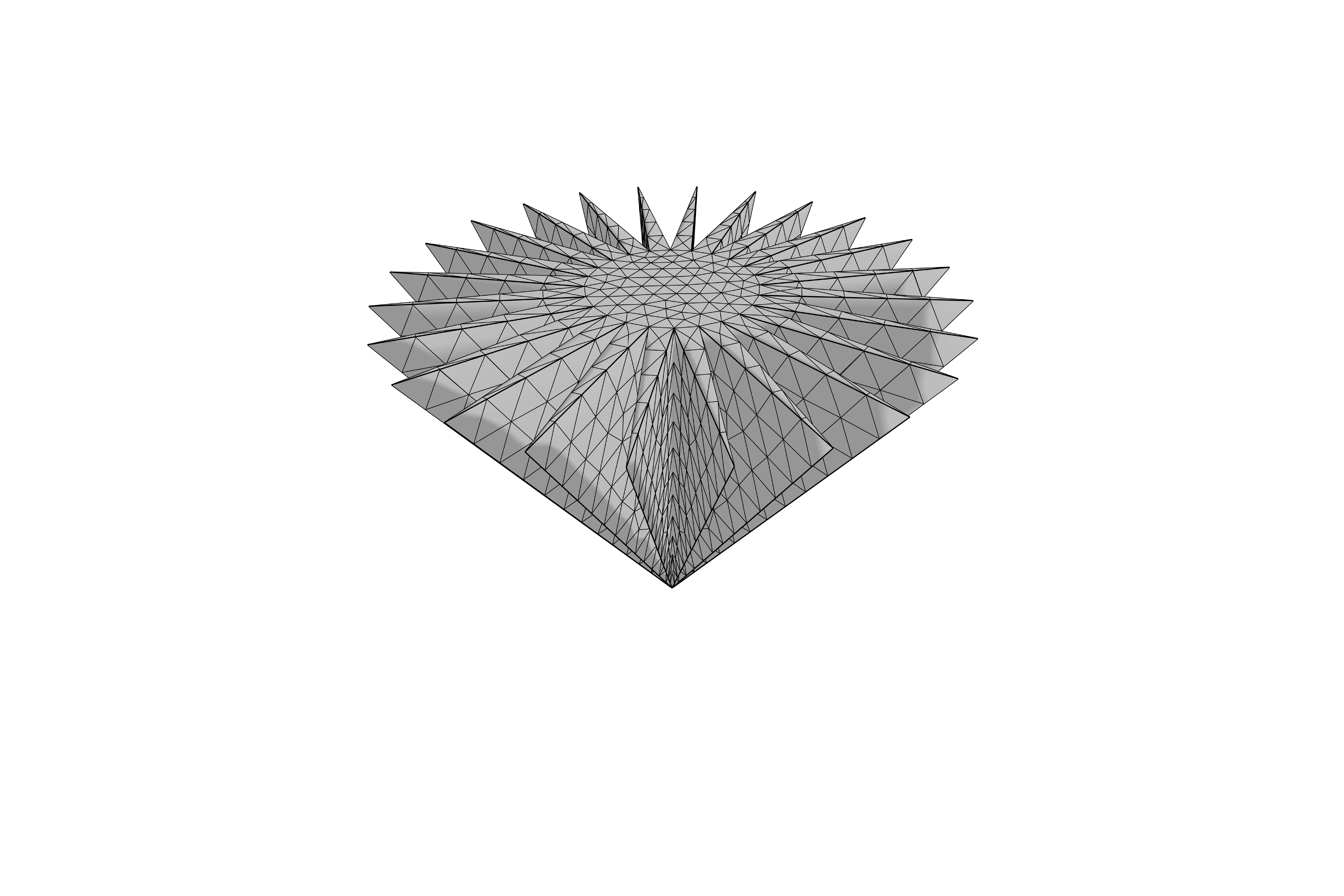}%
            \hspace{-0.35cm}%
            \includegraphics[trim={38cm 18cm 38cm 18cm},clip,width=0.18\linewidth]{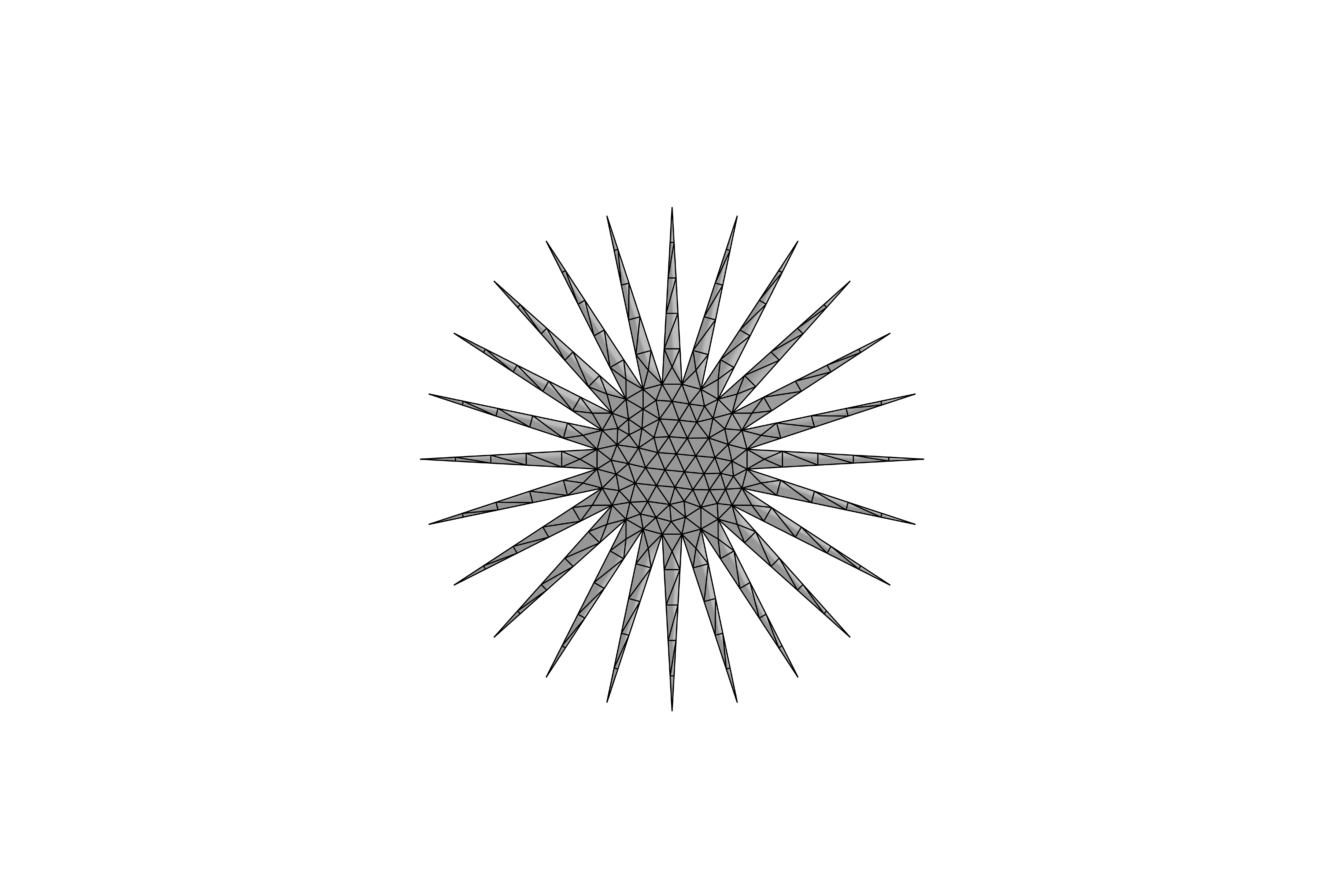}
        }
    \end{subfigure}
    \caption{Triangulations of the scatterers used in Examples \ref{subsec: a cuboid} and \ref{subsec: a star-based pyramid}. Left: a cuboid domain of dimensions $1.0\,\mathrm{m}\times 1.0\,\mathrm{m}\times 0.5\,\mathrm{m}$. Right: a star-based pyramid of height $0.5\,\mathrm{m}$ with a 24-pointed star-shaped base formed by alternating vertices on two circles of radii $1\,\mathrm{m}$ and $0.3\,\mathrm{m}$.}
    \label{fig: meshes}
\end{figure}

\subsection{A cuboid}
\label{subsec: a cuboid}

In this example, we choose an $x_2$-polarized plane wave
$$
\EEs^{\text{inc}} \paren{\xb} = \mathbf{e}_{2}\operatorname{exp}\paren{i\kappa \frac{x_1 + x_3}{\sqrt{2}}} \qqq \forall \xb = \paren{x_1, x_2, x_3}^\top\in \R^3,
$$
where $\mathbf{e}_{2}$ denotes the unit vector in the $x_2$-direction. We consider a cuboid scatterer $\Om_{\text{s}}$ of dimensions $1.0\,\mathrm{m}\times 1.0\,\mathrm{m}\times 0.5\,\mathrm{m}$, centered at $\paren{0.5, 0.5, 0.25}^\top$; see \Cref{fig: meshes} (left). Denoting by $L>0$ the characteristic length of $\Om_{\text{s}}$, we describe the material coefficients by
$$
\mu_{\text{s}}\paren{\xb} = \mu_0, \qqq \varepsilon_{\text{s}}\paren{\xb} = \varepsilon_0\paren{2 + \frac{81 x_1 x_2 x_3\norm{\xb}_2}{4 L^4}}\qqq \forall \xb = \paren{x_1, x_2, x_3}^\top \in \Om_{\text{s}}.
$$

\Cref{tab: a cuboid} reports the condition numbers of the three matrices, together with the corresponding GMRES iteration counts over a sequence of mesh refinements. The comparison between $\mathbb{B}_{\text{C}}$ and $\mathbb{B}_{\text{R}}$ shows that the reformulation yields a modest reduction in the condition numbers and lower GMRES iteration counts. However, this improvement is not sufficient to prevent $\mathbb{B}_{\text{R}}$ from becoming increasingly ill-conditioned as the mesh is refined. In contrast, the condition numbers of $\mathbb{PB}_{\mathrm{R}}$ remain stable across all considered mesh levels, in agreement with \Cref{theo: operator precondition}. A similar improvement is observed in the GMRES iteration counts: compared with the unpreconditioned system, the preconditioned one requires fewer iterations, and these counts vary only mildly as the mesh size decreases. Although no theoretical result is available in the present setting that directly relates the spectral condition number to the GMRES iteration counts, \Cref{tab: a cuboid} suggests that, for this example, the condition number nevertheless provides an empirical indicator of GMRES performance. 

\begin{table}[htbp]
\centering
\caption{Condition numbers and GMRES iteration counts over a sequence of mesh refinements. Example \ref{subsec: a cuboid}: A cuboid.}
\label{tab: a cuboid}
\setlength{\tabcolsep}{4pt}
\renewcommand{\arraystretch}{1.05}
\begin{adjustbox}{max width=\textwidth}
\begin{tabular}{
c
S[table-format=5.0]
S[table-format=5.2]
S[table-format=4.0]
S[table-format=5.0]
S[table-format=5.2]
S[table-format=4.0]
S[table-format=2.2]
S[table-format=2.0]
}
\toprule
\multirow{2}{*}{Mesh size $h$}
& \multicolumn{3}{c}{Costabel coupling}
& \multicolumn{5}{c}{Reformulated coupling} \\
\cmidrule(lr){2-4} \cmidrule(lr){5-9}
& \multicolumn{1}{r}{dof}
& \multicolumn{1}{r}{$\operatorname{cond}\paren{\mathbb{B}_{\mathrm C}}$}
& \multicolumn{1}{r}{iter.}
& \multicolumn{1}{r}{dof}
& \multicolumn{1}{r}{$\operatorname{cond}\paren{\mathbb{B}_{\mathrm R}}$}
& \multicolumn{1}{r}{iter.}
& \multicolumn{1}{r}{$\operatorname{cond}\paren{\mathbb{P}\mathbb{B}_{\mathrm R}}$}
& \multicolumn{1}{r}{iter.} \\
\midrule
$2.580\times 10^{-1}$ &   684 &  2979.84 &  500 &   528 &  2957.05 &  447 & 90.85 & 83 \\
$2.150\times 10^{-1}$ &  1139 &  4433.96 &  747 &   858 &  4406.05 &  665 & 91.34 & 90 \\
$1.792\times 10^{-1}$ &  1592 &  5184.26 &  938 &  1140 &  5140.97 &  818 & 91.84 & 90 \\
$1.493\times 10^{-1}$ &  2282 &  8428.86 & 1183 &  1524 &  8355.20 & 1010 & 91.73 & 90 \\
$1.244\times 10^{-1}$ &  4145 & 13281.13 & 1813 &  2520 & 13164.86 & 1513 & 92.56 & 93 \\
$1.037\times 10^{-1}$ &  5251 & 14304.38 & 2096 &  3006 & 14176.35 & 1718 & 92.10 & 93 \\
$8.640\times 10^{-2}$ &  8016 & 21008.50 & 2731 &  4182 & 20870.16 & 2238 & 92.34 & 94 \\
$7.200\times 10^{-2}$ & 11706 & 27890.56 & 3485 &  5550 & 27705.30 & 2746 & 92.43 & 94 \\
$6.000\times 10^{-2}$ & 20233 & 45333.37 & 5003 &  8466 & 44954.09 & 3655 & 92.47 & 96 \\
$5.000\times 10^{-2}$ & 29850 & 57809.84 & 6505 & 11484 & 57364.20 & 4386 & 93.08 & 97 \\
\bottomrule
\end{tabular}
\end{adjustbox}
\end{table}

\begin{figure}[!htp]
    \centering
    \includegraphics[width=\textwidth]{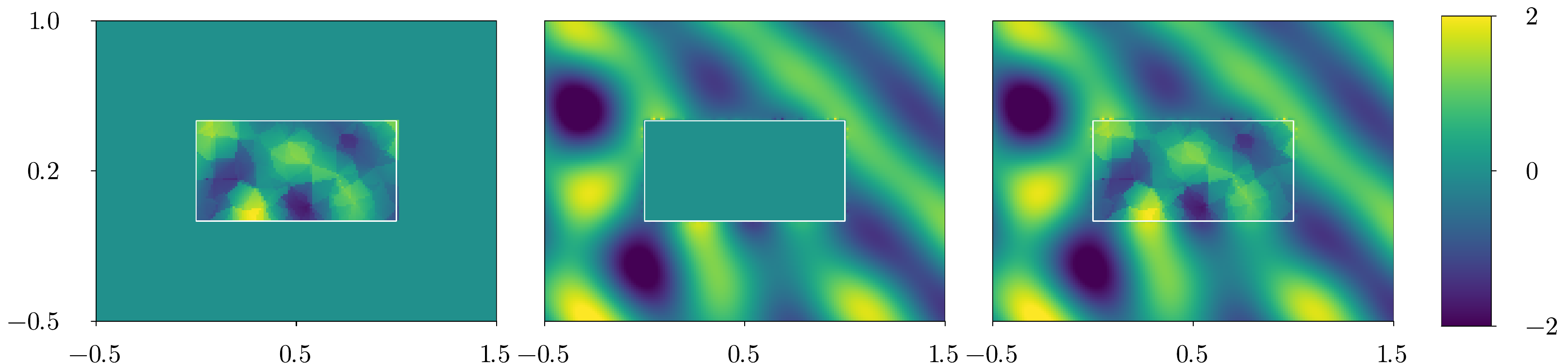}
    \caption{Real part of the $x_2$-component of the discrete total electric field over a selected rectangle in the plane $x_2 = 0.5$. Left: $\Om_{\text{s}}$, middle: $\Om_0$, right: $\Om_{\text{s}}\cup \Om_0$. The interface $\Gm\cap \left\{x_2=0.5\right\}$ is highlighted in white. Example \ref{subsec: a cuboid}: A cuboid, with mesh size $h=0.08\, \mathrm{m}$.}
    \label{fig: near field cuboid}
\end{figure}

As noted in \Cref{rem: advantage of the method}, once Problem \eqref{eq: discrete new formulation} has been solved, the solution $\paren{\mathcal{E}_h\, \bm{\tau}_h, \bm{\uplambda}_h}$ of Problem \eqref{eq: discrete old formulation} can be reconstructed. Here, $\EEs_h = \mathcal{E}_h\, \bm{\tau}_h$ is the discrete total electric field in $\Om_{\text{s}}$, whereas the discrete scattered and total electric fields in $\Om_0$ can be recovered from $\paren{\gm_D^{\text{s}} \EEs_h - \gm_D^0 \EEs^{\text{inc}}, \bm{\uplambda}_h}$ via the Stratton--Chu formula \cite[Lem.~2.4]{Mitrea} or \cite[Thm.~3]{BCS2002B}. For completeness, \Cref{fig: near field cuboid} shows the real part of the $x_2$-component of the discrete total electric field in a selected observation region. The observed continuity across $\Gm\cap \left\{x_2=0.5\right\}$ is consistent with the condition $\gm_D^{\text{s}}\EEs = \gm_D^0 \EEs + \gm_D^0 \EEs^{\text{inc}}$, since the displayed component is taken in the $\mathbf{e}_2$-direction, which is tangential to $\Gm$ along $\Gm \cap \left\{x_2=0.5\right\}$.

\subsection{A star-based pyramid}
\label{subsec: a star-based pyramid}

We now consider an $x_1$-polarized plane wave
$$
\EEs^{\text{inc}} \paren{\xb} = \mathbf{e}_{1}\operatorname{exp}\paren{i\kappa x_3} \qqq \forall \xb = \paren{x_1, x_2, x_3}^\top\in \R^3,
$$
where $\mathbf{e}_{1}$ denotes the unit vector in the $x_1$-direction. We adopt the geometry of $\Om_{\text{s}}$ in \cite[Sect.~7.2]{LC2024}, namely, a pyramid of height $0.5\,\mathrm{m}$ whose base is a $24$-pointed star, formed by alternating vertices on two circles of radii $1\,\mathrm{m}$ and $0.3\,\mathrm{m}$; see \Cref{fig: meshes} (right). This geometry is of particular interest because it defines a nonconvex polyhedron with sharp reentrant corners and reentrant edges, and thus provides a geometrically challenging test case. Finally, we set $\mu_{\text{s}}\paren{\xb} = 0.25\mu_0$ and $\varepsilon_{\text{s}}\paren{\xb} = 4\varepsilon_0$ for all $\xb\in \Om_{\text{s}}$.

\begin{table}[htbp]
\centering
\caption{Condition numbers and GMRES iteration counts over a sequence of mesh refinements. Example \ref{subsec: a star-based pyramid}: A star-based pyramid.}
\label{tab: a star-based pyramid}
\setlength{\tabcolsep}{4pt}
\renewcommand{\arraystretch}{1.05}
\begin{adjustbox}{max width=\textwidth}
\begin{tabular}{
c
S[table-format=5.0]
S[table-format=7.2]
S[table-format=5.0]
S[table-format=5.0]
S[table-format=6.2]
S[table-format=5.0]
S[table-format=3.2]
S[table-format=3.0]
}
\toprule
\multirow{2}{*}{Mesh size $h$}
& \multicolumn{3}{c}{Costabel coupling}
& \multicolumn{5}{c}{Reformulated coupling} \\
\cmidrule(lr){2-4} \cmidrule(lr){5-9}
& \multicolumn{1}{r}{dof}
& \multicolumn{1}{r}{$\operatorname{cond}\paren{\mathbb{B}_{\mathrm C}}$}
& \multicolumn{1}{r}{iter.}
& \multicolumn{1}{r}{dof}
& \multicolumn{1}{r}{$\operatorname{cond}\paren{\mathbb{B}_{\mathrm R}}$}
& \multicolumn{1}{r}{iter.}
& \multicolumn{1}{r}{$\operatorname{cond}\paren{\mathbb{P}\mathbb{B}_{\mathrm R}}$}
& \multicolumn{1}{r}{iter.} \\
\midrule
$1.600\times 10^{-1}$ & 5569  & 834753.46  & 4506  & 4608  & 131091.69 & 3649  & 216.44 & 236 \\
$1.393\times 10^{-1}$ & 8479  & 1259523.27 & 6544  & 6918  & 195516.07 & 5221  & 215.07 & 240 \\
$1.213\times 10^{-1}$ & 9085  & 1271603.35 & 6953  & 7350  & 216964.18 & 5493  & 216.95 & 239 \\
$1.056\times 10^{-1}$ & 11444 & 2122934.37 & 8584  & 9216  & 288468.26 & 6758  & 209.94 & 247 \\
$9.190\times 10^{-2}$ & 14381 & 2348581.35 & 10559 & 11502 & 397579.01 & 8254  & 205.47 & 255 \\
$8.000\times 10^{-2}$ & 17650 & 3434691.26 & 12623 & 13950 & 487766.17 & 9911  & 215.33 & 289 \\
$6.964\times 10^{-2}$ & 23991 & 5016198.24 & 16196 & 18726 & 728486.86 & 13037 & 189.55 & 274 \\
$6.063\times 10^{-2}$ & 28825 & 6545189.50 & 18303 & 22284 & 929057.78 & 15149 & 192.11 & 293 \\
\bottomrule
\end{tabular}
\end{adjustbox}
\end{table}

The results are summarized in \Cref{tab: a star-based pyramid}. For this star-shaped pyramid, the effect of the preconditioner is particularly pronounced. Compared with $\mathbb{B}_{\text{C}}$, the matrix $\mathbb{B}_{\text{R}}$ has a lower condition number at each mesh level. Moreover, the corresponding system is solved with fewer GMRES iterations. However, under mesh refinement, the condition number and GMRES iteration count still exhibit rapid growth. The behavior of the preconditioned matrix $\mathbb{PB}_{\mathrm{R}}$ is significantly different. Its condition number remains within a relatively narrow range and appears to be stable, in agreement with \Cref{theo: operator precondition}. The GMRES iteration counts for the preconditioned system do increase under mesh refinement, but much more slowly than those for the unpreconditioned system, and remain far smaller throughout. Once again, the condition number appears to correlate well with the GMRES iteration counts.

\section{Conclusions}
\label{sec: conclusions}

In this work, we derived an equivalent reformulation of the Costabel FEM-BEM coupling for the time-harmonic Maxwell transmission problem by means of a trace lifting operator and the Steklov--Poincar{\' e} operator. Assuming that the angular frequency $\om$ lies outside the resonance sets, we established the equivalence between the reformulated coupling and the Costabel coupling at both the continuous and discrete levels. Within the operator preconditioning framework, we then constructed a block-diagonal preconditioner for the discrete system arising from the reformulated coupling and proved that the spectral condition number of the preconditioned matrix is uniformly bounded.

A natural continuation of the present work is to derive a regularized formulation of Problem \eqref{eq: new formulation}, following the ideas of \cite{ABL2007, LC2024B}, thereby removing the assumption that $\om$ lies outside the resonance sets. Another direction is to extend the proposed preconditioning technique to other FEM-BEM coupling formulations.

\section*{Acknowledgment}
This work was supported by the European Research Council (ERC) under the European Union's Horizon 2020 Research and Innovation Program under Grant 101001847.


\bibliographystyle{elsarticle-num} 
\bibliography{abrv_ref}
\end{document}

%% file: lvc_style.tex
\newcommand{\R}{\mathbb R}
\newcommand{\C}{\mathbb C}

\DeclareMathOperator{\LLs}{\mathbf L}

\DeclareMathOperator{\Hs}{H}
\DeclareMathOperator{\HHs}{\mathbf H}

\DeclareMathOperator{\WWs}{\mathbf W}

\DeclareMathOperator{\Cs}{C}
\DeclareMathOperator{\CCs}{\mathbf C}

\DeclareMathOperator{\EEs}{\mathbf E}

\DeclareMathOperator{\UUs}{\mathbf U}

\DeclareMathOperator{\XXs}{\mathbf X}
\DeclareMathOperator{\YYs}{\mathbf Y}

\DeclareMathOperator{\ZZs}{\mathbf Z}

\newcommand{\paren}[1]{\left({#1}\right)}
\newcommand{\norm}[1]{\left\lVert{#1}\right\rVert}

\newcommand{\abs}[1]{\left\vert{#1}\right\vert}

\newcommand{\inprod}[2]{\left\langle{#1}, {#2}\right\rangle}

\newcommand{\vertiii}[1]{{\left\vert\kern-0.25ex\left\vert\kern-0.25ex\left\vert #1 
    \right\vert\kern-0.25ex\right\vert\kern-0.25ex\right\vert}}
    
\newcommand{\bm}[1]{\boldsymbol{#1}}

\newcommand{\curlt}{\textbf{\textup{curl}}}

\newcommand{\gm}{\gamma}
\newcommand{\Gm}{\Gamma}
\newcommand{\om}{\omega}
\newcommand{\Om}{\Omega}

\newcommand{\vphi}{\varphi}

\newcommand{\xb}{\bm{x}}
\newcommand{\yb}{\bm{y}}

\newcommand{\vb}{\bm{v}}
\newcommand{\ub}{\bm{u}}

\newcommand{\tv}{\textbf{t}}

\newcommand{\dx}{\,\mathrm{d}\xb}

\newcommand{\ds}{\,\mathrm{d}s}

\newcommand{\q}{\quad}
\newcommand{\qq}{\qquad}
\newcommand{\qqq}{\qquad\quad}

%% file: thm_style.tex
\newtheorem{theorem}{Theorem}[section]
\newtheorem{lemma}{Lemma}[section]
\newtheorem{proposition}[theorem]{Proposition}

\newtheorem{remark}{Remark}[section]
\newtheorem{assumption}{Assumption}